\newtheorem{theorem}{Theorem}[section]
\newtheorem{lemma}[theorem]{Lemma}
\newtheorem{corollary}[theorem]{Corollary}
\newtheorem{conjecture}[theorem]{Conjecture}
\theoremstyle{definition}
\newtheorem{definition}[theorem]{Definition}
\newtheorem*{theorem*}{Theorem}
\newtheorem*{proposition*}{Proposition}
\newtheorem*{lemma*}{Lemma}
\theoremstyle{remark}
\newtheorem*{note}{Note}
\numberwithin{equation}{section}
\def\al{{\alpha}}
\def\be{{\beta}}
\def\ld{{\lambda}}
\def\CC{{\mathbb C}}
\def\HH{{\mathbb H}}
\def\Ak1{{\mathbb A_k ^1}}
\def\RR{{\mathbb R}}
\def\ZZ{{\mathbb Z}}
\newcommand*{\defeq}{\mathrel{\rlap{%
                     \raisebox{0.3ex}{$\m@th\cdot$}}%
                     \raisebox{-0.3ex}{$\m@th\cdot$}}%
                     =}
\newcommand{\Conv}{\mathop{\scalebox{1.5}{\raisebox{-0.2ex}{$\ast$}}}}%
\def\cal{\mathcal}
\def\cA{{\cal A}}
\def\cB{{\cal B}}
\def\cC{{\cal C}}
\def\cD{{\cal D}}
\def\cG{{\cal G}}
\def\gone{{G_1 \setminus \{1\}}}
\def\gtwo{{G_2 \setminus \{1\}}}
\def\goc{{\left(\left|G_1\right|-1\right)}}
\def\gtc{{\left(\left|G_2\right|-1\right)}}
\def\cS{{\mathfrak S}}
\def\ii{{\mathfrak i}}
\def\cos{\operatorname{cos}}
\def\Im{\operatorname{Im}}
\def\SL{\operatorname{SL}}
\def\Hom{\operatorname{Hom}}
\def\Tr{\operatorname{Tr}}
\def\length{\operatorname{length}}
\def\PSL{\operatorname{PSL}}
\def\conj{\operatorname{conj}}
\newcommand\floor[1]{\left\lfloor#1\right\rfloor}
\newcommand*{\shom}{\mathscr{H}\kern -.5pt om}
\begin{document}

\title[Conjugacy Growth of Commutators]{Conjugacy Growth of Commutators}

\date{\today}

\author[Peter S. Park]{Peter S. Park}
\address{Department of Mathematics, Harvard University, Cambridge, MA 02138}
\email{\href{mailto:pspark@math.harvard.edu}{{\tt pspark@math.harvard.edu}}}

\begin{abstract}
For the free group $F_r$ on $r>1$ generators (respectively, the free product  $G_1 * G_2$ of two nontrivial finite groups $G_1$ and $G_2$), we obtain the asymptotic for the number of conjugacy classes of commutators in $F_r$ (respectively, $G_1 * G_2$) with a given word length in a fixed set of free generators (respectively, the set of generators given by the nontrivial elements of $G_1$ and $G_2$). Our result is proven by using the classification of commutators in free groups and in free products by Wicks, and builds on the works of Rivin and Sharp, who asymptotically counted the conjugacy classes of commutator-subgroup elements in $F_r$ with a given word length. 
\end{abstract}
\maketitle

\section{Introduction}\label{sec:intro}
Let $G$ be a finitely generated group with a finite symmetric set of generators $\cS$. Any element $g\in G$ can then be written as a word in the letters of $\cS$, and one can  define the \emph{length} of $g$ by
\[
\inf\left\{  k : \exists c_{1},\ldots,c_{k} \text{ s.t. }g=c_{1}\ldots c_{k}\right\}.
\]
Consider the closed ball $B_k(G,\cS) \subset G$ of radius $k$ in the word metric, defined as the subset consisting of elements with length $\le k$. One can then ask natural questions about the growth of $G$: how large is $|B_k(G,\cS)|$ as $k \to \infty$, and more generally, what connections can be made between the properties of $G$ and this notion of its growth rate? For example, one of the pioneering results on this question is that of Gromov~\cite{gromov}, who classified the groups $G$ with polynomial growth.

Since the middle of the 20th century, the growth of groups has been widely studied in various contexts largely arising from geometric motivations, such as characterizing the volume growth of Riemannian manifolds and Lie groups. In fact, in addition to applying knowledge on the growth of groups to describe growth in such geometric settings, one can also pass information in the other direction, i.e., use information on volume growth in geometric settings to yield consequences regarding the growth of the relevant groups. For instance, Milnor~\cite{milnor} used inequalities relating the volume and curvature of Riemannian manifolds to prove that the fundamental group of any compact, negatively-curved Riemannian manifold has at least exponential growth. 

A group that arises especially commonly in such geometric contexts is the free group $F_r$ on $r>1$ generators, which also has exponential growth; more precisely, after fixing a symmetric generating set $\cS\defeq\{x_1,\ldots,x_r,x_1^{-1}, \ldots, x_r^{-1}\}$, it is easy to see that 
\begin{align*}
|B_k(G,\cS)| = 1+\sum_{i=1}^{k} |\partial B_i(G,\cS)| &= 1+ \sum_{i=1}^{k} 2r(2r-1)^{i-1} \\&= 1+\frac{r\left((2r-1)^k-1\right)}{r-1},
\end{align*}
where $\partial B_k(G,\cS)$ denotes the subset of length-$k$ elements.

\subsection{Motivation and main results}
In certain contexts, it is more natural to consider the growth rate of the conjugacy classes of $G$. For a given conjugacy class $\cC$ of $G$, define the \emph{length} of $\cC$ by
\[
\inf_{g \in \cC} \length(g),
\]
and define $\partial B_k^{\conj}(G,\cS)$ as the set of conjugacy classes of $G$ with length $k$.  In the case of $F_r$, the minimal-length elements of a conjugacy class are precisely its cyclically reduced elements, all of which are cyclic conjugates of each other. The conjugacy growth of $F_r$ can be described as $\partial B_k^{\conj}(G,\cS) \sim (2r-1)^k/k$, which agrees with the intuition of identifying the cyclic conjugates among the $2r(2r-1)^{k-1}$ words of length $k$; for the full explicit formula, see~\cite[Proposition 17.8]{conj}. 

One context for which conjugacy growth may be a more natural quantity to study than the growth rate in terms of elements is when characterizing the frequency with which a conjugacy-invariant property occurs in $G$. An example of such a property is membership in the commutator subgroup $[G,G]$. On this front, Rivin~\cite{rivin} computed the number $c_k$ of length-$k$ cyclically reduced words in $F_r$ that are in the commutator subgroup (i.e., have trivial abelianization) to be the constant term in the expression
\[
(2\sqrt{2r-1})^k T_k\left( \frac{1}{2\sqrt{2r-1}} \sum_{i=1}^r \left(x_i+ \frac{1}{x_i} \right)\right),
\]
where $T_k$ denotes the $k$th Chebyshev polynomial of the first kind. Furthermore, Sharp~\cite{sharp} obtained the asymptotics of $c_k$, given by
\[
c_{2m} \sim \frac{4r(2r-1)^{2m-1}}{(2\pi)^{\frac{r}{2}} \sigma^r m^{\frac{r}{2}}},
\]
where 
\[
\sigma\defeq\left(\frac{1}{\sqrt{2r-1}} \left( 1+\left(\frac{r+\sqrt{2r-1}}{r-\sqrt{2r-1}}\right)^{\frac{1}{2}} \right)\right)^\frac{1}{2};
\]
note also that when $k$ is odd, we have $c_k=0$. From the number of cyclically reduced words with trivial abelianization, one can derive the  growth of  conjugacy classes with trivial abelianization by using M\"{o}bius inversion, due to the following relationships:
\[
c_k = \sum_{d \mid k} p_d,
\]
where $p_d$ denotes the number of primitive (i.e., not a proper power of any subword) length-$d$ words with trivial abelianization, and 
\[
 |\partial B_k^{\conj}(G,\cS) \cap [G,G]| = \sum_{d \mid k} \frac{p_d}{d},
\]
which together imply by M\"{o}bius inversion that
\begin{align}
 |\partial B_k^{\conj}(G,\cS) \cap [G,G]| = \sum_{d \mid k} \frac{1}{d}\sum_{e \mid d} \mu\hspace{-1.5pt}\left( \frac{d}{e}\right) c_e &= \sum_{e \mid k} \frac{c_e}{e} \left( \sum_{d' \mid \frac{k}{e}} \frac{\mu(d')}{d'}\right)\nonumber \\ & = \sum_{e \mid k} \frac{c_e}{e} \cdot \frac{\phi(k/e)}{k/e} =  \sum_{e \mid k} c_e\frac{\phi(k/e)}{k}.\label{mobius}
\end{align}
In the above, $\phi$ denotes the Euler totient function. For details on this derivation, the reader is directed to~\cite[Chapter 17]{conj} .

In this paper, we answer the analogous question for commutators rather than for commutator-subgroup elements. This new inquiry is structurally different in that it aims to solve a Diophantine equation over a group $G$ (whether, for a given $W \in G$, there exist $X,Y\in G$ such that $XYX^{-1}Y^{-1}=W$), rather than a subgroup-membership problem (whether $W$ is in $[G,G]$). In particular, the set of commutators is not multiplicatively closed, so we cannot use primitive words as a bridge between counting cyclically reduced words and counting conjugacy classes as above. Instead, we use a theorem of Wicks~\cite{wicks}, which states that an element of $F_r$ is a commutator if and only if it is a cyclically reduced conjugate of a commutator satisfying the following definition.
\begin{definition}
A \emph{Wicks commutator of $F_r$} is a cyclically reduced word  $W\in F_r$ of the form $ABCA^{-1}B^{-1}C^{-1}$. Equivalently, it is a word of the form $ABCA^{-1}B^{-1}C^{-1}$ such that the subwords $A,B,$ and $C$ are reduced; there are no cancellations between the subwords $A,B,C,A^{-1},B^{-1},$ and $C^{-1}$; and the first and last letters are not inverses.
\end{definition}
Using this classification, we count the number of conjugacy classes of commutators in $F_r$ with length $k$  by counting the number of Wicks commutators with length $k$.

\begin{theorem}\label{thm1}
Let $k \ge 0$ be even. The number of distinct conjugacy classes of commutators  in $F_r$ with length $k$ is given by
\[
\frac{(2r-2)^2(2r-1)^{\frac{k}{2}-1}}{96r} \left(k^2 + O_r\left(k\right)  \right),
\]
where the implied constant  depends only on $r$ and is effectively computable.
\end{theorem}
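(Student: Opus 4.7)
The plan is to count Wicks commutators of length $k$ with multiplicity and divide by a combinatorial overcounting factor. By Wicks' theorem, every conjugacy class of commutators in $F_r$ contains a Wicks commutator, and generically contains exactly six: given $W = ABCA^{-1}B^{-1}C^{-1}$, cyclically rotating at each of the six cut points between consecutive subwords produces another Wicks commutator, since e.g.\ rotating by $|A|$ letters yields $BCA^{-1}B^{-1}C^{-1}A$, which has the form $A'B'C'(A')^{-1}(B')^{-1}(C')^{-1}$ with $(A',B',C') = (B,C,A^{-1})$. For a non-proper-power $W$ whose Wicks decomposition is unique, no other cyclic rotation yields a Wicks-form word, so I shall count ordered triples $(A,B,C)$ giving a Wicks decomposition of length $k$ and divide by $6$.

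Each such triple consists of nonempty reduced words with $|A|+|B|+|C| = k/2$, subject to six non-cancellation constraints at the boundaries of $ABCA^{-1}B^{-1}C^{-1}$ (including cyclic closure). Writing $f_X, \ell_X$ for the first and last letters of a reduced word $X$, the six constraints decouple into two independent triangular systems: the system on $(\ell_A, f_B, \ell_C)$ demanding $\ell_A \neq f_B^{-1}$, $\ell_C \neq \ell_A$, $f_B \neq \ell_C^{-1}$, and an analogous system on $(f_A, \ell_B, f_C)$. Direct enumeration yields $2r(2r-1)(2r-2)$ admissible tuples in each system, hence $\left[2r(2r-1)(2r-2)\right]^2$ valid boundary configurations in total. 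The number of reduced words of length $n$ between fixed endpoints is $(2r-1)^{n-1}/(2r) + O_r(1)$ by transfer-matrix analysis (the adjacency matrix on the $2r$ letters has leading eigenvalue $2r-1$ and remaining eigenvalues in $\{+1,-1\}$), so the triple count for fixed $(a,b,c) = (|A|,|B|,|C|)$ with $a+b+c=k/2$ is asymptotically $(2r-2)^2 (2r-1)^{k/2-1}/(2r)$.

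Summing over the $\binom{k/2-1}{2} = k^2/8 + O(k)$ compositions of $k/2$ into three positive parts yields a total triple count of $\frac{(2r-2)^2 (2r-1)^{k/2-1}}{16r}(k^2 + O_r(k))$, and dividing by $6$ gives the theorem. The main obstacle is rigorously controlling the error at the stated order: I must verify that (i) the $O_r(1)$ corrections from subleading transfer-matrix eigenvalues aggregate to $O_r(k(2r-1)^{k/2-1})$ over the $O(k^2)$ length-triples (which follows from a geometric-series bound once terms are grouped by $\min(a,b,c)$), (ii) the $O(k)$ length-triples with $\min(a,b,c)$ small, where the transfer-matrix asymptotic is imprecise, each contribute at most $O_r((2r-1)^{k/2})$, and (iii) non-generic Wicks commutators (proper powers, or those admitting multiple Wicks decompositions) form a set of cardinality $O_r((2r-1)^{k/2})$, so that the factor-of-$6$ overcounting is accurate up to the stated error. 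All three bounds are routine but must be tracked simultaneously to preserve the $O_r(k)$ remainder inside the parentheses.
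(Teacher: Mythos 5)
Your approach is essentially the same as the paper's: count Wicks commutators via an endpoint-distribution estimate for reduced words, argue that each conjugacy class contains six of them on average, and divide. The arithmetic is right and matches the paper's: the decoupling of the six boundary constraints into two independent triangular systems, each with $2r(2r-1)(2r-2)$ solutions, together with the transfer-matrix asymptotic $(2r-1)^{n-1}/(2r)+O_r(1)$ for fixed endpoints, gives a per-composition count of $(2r-2)^2(2r-1)^{k/2-1}/(2r)$, and summing over $\binom{k/2-1}{2}$ compositions and dividing by six yields the stated constant $\frac{(2r-2)^2(2r-1)^{k/2-1}}{96r}$. The transfer-matrix framing is a clean repackaging of the paper's Lemma 2.1 and of the conditional-probability computation.

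The gap is in the sentence ``For a non-proper-power $W$ whose Wicks decomposition is unique, no other cyclic rotation yields a Wicks-form word,'' which you state as a clean structural dichotomy and then treat as settled. You never prove it, and the paper does not prove it either: the paper's entire Section 2.2 (roughly half the proof) is devoted to bounding \emph{directly} how many cyclic rotations of a Wicks commutator admit a Wicks decomposition. For each shift $2\ell$ and each candidate partition $p'$, this requires a case analysis on where the fixed points of an auxiliary map $g$ fall (in $A$, in $B$, or in $E$ after writing $C=DEF$), and the exceptional configurations one finds are characterized by periodicity of subwords (e.g.\ $A$ a power of $D^{-1}$ and $E$ a power of $D$) — conditions that are not literally the same as ``$W$ is a proper power'' or ``$W$ has a second Wicks decomposition.'' So the dichotomy as you phrase it is not obviously true, and even if it holds, verifying it (and your claim (iii)) is precisely the technical heart of the theorem; calling it ``routine'' seriously understates the work. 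Separately, the paper's bounds for both the doubly-decomposable Wicks commutators and the extra-rotation Wicks commutators are $\ll X(2r-1)^X$, i.e.\ $O_r(k(2r-1)^{k/2})$, not the $O_r((2r-1)^{k/2})$ you assert in (iii); your stronger bound is not established, though the weaker one still suffices for the stated error term.
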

\noindent Note that the number  of conjugacy classes of commutators in $F_r$ is roughly proportional to the square root of the number (\ref{mobius}) of all conjugacy classes with trivial abelianization.

We also employ a similar argument, using Wicks' characterization of commutators in free products, to answer the analogous question for the free product $G_1 * G_2$ of two nontrivial finite groups. Like free groups, free products have exponential growth, which has been studied recently by Mann~\cite{freeproductsmann}. We consider the set of generators  $\cS\defeq  (G_1 \setminus \{1\}) \cup  ( G_2 \setminus \{1\} )$ of $G_1 * G_2$. Then, a theorem of Wicks~\cite{wicks} analogous to the previous one implies that an element of $G_1 * G_2$ is  a commutator if and only if it is a cyclic conjugate of a commutator satisfying the following definition.

\begin{definition}\label{wicksdef2}
A \emph{Wicks commutator of $G_1 *G_2$} is a word $W\in G_1 * G_2$ that is \emph{fully cyclically reduced}, which is to say that the adjacent letters (i.e., nonidentity elements of $G_1$ and $G_2$ in the word) are in different factors of the free product, as are the first and last letters; and in one of the following forms:
\begin{enumerate}
\item $\al \in G_1$ that is a commutator of $G_1$,
\item $\al \in G_2$ that is a commutator of $G_2$,
\item $A \al_1 A \al_2^{-1}$ for $\al_1, \al_2 \in G_2$ that are conjugates,
\item $\al_1  A\al_2^{-1} A $ for $\al_1, \al_2 \in G_1$ that are conjugates,
\item $ABA^{-1}B^{-1}$,
\item $A\al_1 B \al_2  A^{-1} \al_3  B^{-1}\al_4  $ for $\al_1, \al_2, \al_3, \al_4 \in G_2$ satisfying $\al_4\al_3\al_2\al_1=1$,
\item $\al_1 A \al_2  B \al_3  A^{-1}\al_4  B^{-1}$ for $\al_1, \al_2, \al_3, \al_4 \in G_1$ satisfying $\al_4\al_3\al_2\al_1=1$,
\item $A\al_1 B \be_1 C\al_2  A^{-1} \be_2  B^{-1}\al_3 C^{-1}\be_3 $ for $\al_1, \al_2, \al_3, \be_1, \be_2, \be_3\in G_2$ satisfying $\al_3\al_2\al_1=1$ and $\be_3\be_2\be_1=1$, with $A, B,$ and $C$ nontrivial,
\item $\be_1 A\al_1 B \be_2 C\al_2  A^{-1} \be_3  B^{-1}\al_3 C^{-1} $ for $\al_1, \al_2, \al_3 ,\be_1, \be_2, \be_3\in G_1$ satisfying $\al_3\al_2\al_1=1$ and $\be_3\be_2\be_1=1$, with $A, B,$ and $C$ nontrivial.
\end{enumerate}
\end{definition}

A fully cyclically reduced element $W\in G_1 *G_2$ with length $k>1$  alternates between $k/2$ letters in $\gone$ and $k/2$ letters in $\gtwo$, where $k/2$ is necessarily an integer. Thus, if $k>1$ is odd, then there are no fully cyclically reduced elements of $G_1 * G_2$ with length $k$. Furthermore, for $W$ to be a Wicks commutator not of the form $(1)$ or $(2)$ in  Definition~\ref{wicksdef2},  it is necessary not only that $k$ is even, but also that $k/2$ is an even integer. Thus, all  but possibly finitely many (the length-$1$ exceptions) Wicks commutators of $G_1*G_2$ have length divisible by $4$. For any $k$ divisible by $4$, we obtain the number of length-$k$ conjugacy classes in $G_1 * G_2$ comprised of commutators.
\begin{theorem}\label{thm2}
Let $k \ge 0$ be a multiple of $4$. The number of distinct conjugacy classes of commutators in $G_1 * G_2$ with length $k$ is given by
\begin{align*}
 \frac{1}{192}\left(\goc\left( \left|G_2\right|-2 \right)^2 +\left( \left|G_1\right|-2 \right)^2 \gtc \right) & k^2  \goc^{\frac{k}{4}-1} \gtc^{\frac{k}{4}-1} 
 \\&+O_{|G_1|,|G_2|}\left( k\goc^{\frac{k}{4}} \gtc^{\frac{k}{4}}  \right),
\end{align*}
where the implied constant only depends on $|G_1|$ and $|G_2|$, and is effectively computable.
\end{theorem}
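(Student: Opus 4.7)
The plan is to mimic the proof of Theorem~\ref{thm1} using Wicks' classification for free products: every commutator in $G_1 * G_2$ is a cyclic conjugate of a Wicks commutator in the sense of Definition~\ref{wicksdef2}, so the number of conjugacy classes of commutators of length $k$ equals the number of cyclic equivalence classes of length-$k$ Wicks commutators. I will separate the enumeration by Wicks form and show that for $k$ a multiple of $4$, forms~(8) and~(9) produce the $k^2$ leading term, while all other forms and all periodic cases within (8) and~(9) contribute only to the stated error $O_{|G_1|,|G_2|}(k\goc^{k/4}\gtc^{k/4})$.

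For form~(8), consider a word $A\al_1 B\be_1 C\al_2 A^{-1}\be_2 B^{-1}\al_3 C^{-1}\be_3$ of length $k$. Full cyclic reduction forces each of $A$, $B$, $C$ to begin and end with a letter in $\gone$, hence to have odd length, say $|A|=2m_A+1$, etc., with $m_A+m_B+m_C=k/4-3$. The number of reduced words of length $2m+1$ beginning and ending in $\gone$ is $\goc^{m+1}\gtc^{m}$, so the product over $A$, $B$, $C$ equals $\goc^{k/4}\gtc^{k/4-3}$ independently of the composition, and the number of nonnegative compositions of $k/4-3$ into three parts is $\binom{k/4-1}{2}$. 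The closure conditions $\al_3\al_2\al_1=1$ and $\be_3\be_2\be_1=1$ combined with the nontriviality of all six entries force $(\al_1,\al_2)\in(\gtwo)^2$ with $\al_2\ne\al_1^{-1}$, and analogously for $(\be_1,\be_2)$, yielding $\gtc^2(|G_2|-2)^2$ choices for the $\al$'s and $\be$'s. Multiplying, the total number of length-$k$ Wicks commutators of form~(8) equals
\[
\binom{k/4-1}{2}\goc^{k/4}\gtc^{k/4-1}(|G_2|-2)^2.
\]

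I then argue that generically each cyclic equivalence class of a form-(8) word contains exactly six form-(8) words, namely the cyclic rotations to the starts of $A$, $B$, $C$, $A^{-1}$, $B^{-1}$, $C^{-1}$. Each such rotation yields a form-(8) word after suitably relabeling the $\al_i$ and $\be_j$ using the cyclic invariance $xyz=1 \iff yzx=1 \iff zxy=1$ in any group, whereas rotation to any other position either places a letter of $\gtwo$ at the start (ruling out the opening block) or induces a block-length pattern incompatible with the form-(8) template. Dividing the count above by $6$ and taking the leading term $\binom{k/4-1}{2}/6 \sim k^2/192$ produces the first summand of Theorem~\ref{thm2}; the symmetric analysis of form~(9), obtained from form~(8) by interchanging the roles of $G_1$ and $G_2$, produces the second summand.

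The main obstacle is the error analysis. Forms~(1) and~(2) are of length $1$, and forms~(3), (4) contain only one subword of variable length, hence contribute $O(\goc^{k/4}\gtc^{k/4})$ Wicks commutators. Form~(5), $ABA^{-1}B^{-1}$, has two subwords summing to $k/2$ and contributes $O(k\goc^{k/4}\gtc^{k/4})$; forms~(6) and~(7) have two subwords together with four single letters subject to a product-$1$ relation, and a similar calculation bounds their contribution by $O(k\goc^{k/4}\gtc^{k/4})$. The residual error comes from the nongeneric form-(8) and form-(9) words whose six putative rotations collapse due to coincidences among $A$, $B$, $C$ (or their inverses), or among the $\al_i$, $\be_j$; imposing any such coincidence removes at least one subword's worth of degrees of freedom, so their count is bounded by $O(k\goc^{k/4}\gtc^{k/4})$ and absorbed into the stated error. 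Summing these contributions yields Theorem~\ref{thm2}.
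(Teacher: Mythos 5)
Your high-level plan coincides with the paper's: enumerate Wicks commutators by form, show that forms (8) and (9) dominate, and argue that a typical conjugacy class contains six Wicks commutators. The counting of form-(8) words themselves (choice of $(\al_i,\be_j)$ giving $\gtc^2(|G_2|-2)^2$, the $\goc^{k/4}\gtc^{k/4-3}$ from the letters of $A,B,C$, and the composition count) is correct and matches the paper up to the harmless bookkeeping difference of $\binom{k/4-1}{2}$ versus the paper's $(k/4-4)(k/4-5)/2$, which disappears into the error term. Your bounds for forms (1)--(7) are also consistent with the paper.

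However, there is a genuine gap in the central step that makes the ``divide by 6'' legitimate. You write that rotation to any position other than the starts of $A,B,C,A^{-1},B^{-1},C^{-1}$ ``either places a letter of $\gtwo$ at the start (ruling out the opening block) or induces a block-length pattern incompatible with the form-(8) template.'' The first alternative is fine, but the second is simply false as a blanket statement: a rotation by an even number of letters lands again on a $\gone$ letter, and the resulting word can perfectly well admit a \emph{different} form-(8) decomposition $w_1\al'_1 w_2\be'_1 w_3\al'_2 w_1^{-1}\be'_2 w_2^{-1}\al'_3 w_3^{-1}\be'_3$ with its own block lengths $(m_1,m_2,m_3)$ unrelated to $(n_1,n_2,n_3)$, or land on one of the forms (3)--(7) or (9). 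There is no a priori incompatibility to appeal to. The follow-up claim that the exceptional words ``collapse due to coincidences among $A,B,C$ \ldots imposing any such coincidence removes at least one subword's worth of degrees of freedom'' is precisely the assertion that needs proof, and it is not a matter of a few obvious coincidences: it requires analyzing, for every even shift $2\ell$ and every candidate partition $p'=(m_1,m_2,m_3)$, how the letter-matching constraints of the two decompositions interact. The paper does this by introducing a position-matching map $g$, separating the fixed-point-free case (which yields $\le X/2$ degrees of freedom via an identification algorithm) from cases where $g$ has fixed points in $A$, $B$, or $E$, and then running a multi-page case analysis (Cases 1--3 and their subcases) to bound each configuration by $O(X\goc^X\gtc^X)$. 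Without this analysis, or some substitute for it, your proposal establishes the count of Wicks commutators but not the count of conjugacy classes, so the theorem is not yet proved.
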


Suppose that $4 \mid k> 0$. Then, the set of cyclically reduced elements of $G_1 * G_2$ (whose letters of odd position are elements of $G_1$, without loss of generality) with length $k$ and trivial abelianization bijectively maps to the Cartesian product of two sets: the set of closed paths of length $k/2$ on the complete graph $K_{|G_1|}$ with fixed basepoint $P_1$, and the set of closed paths of length $k/2$ on the complete graph $K_{|G_2|}$ with fixed basepoint $P_2$. For a graph $E$, the number of closed paths on $E$ with length $n$ is given by  $\sum \lambda^n$, where $\lambda$ ranges over the eigenvalues (counted with multiplicity) of the adjacency matrix of $E$. For $m\ge 2$, the adjacency matrix of $K_m$ is the $m\times m$ matrix with diagonal entries $0$ and all other entries $1$; its eigenvalues are $m-1$ (with multiplicity $1$) and $-1$ (with multiplicity $m-1$). Consequently, for even integers $n>0$, the number of closed paths on $K_m$ with length $n$ is given by $(m-1)^n+(m-1)(-1)^n=m(m-1)^{n-1}$. By symmetry between the $m$ vertices, the number of such closed paths with a given basepoint is $(m-1)^{n-1}$. Thus, the number of cyclically reduced words with length $k$ and trivial abelianization in  $G_1 * G_2$ is given by
\[
(|G_1|-1)^{\frac{k}{2}-1}(|G_2|-1)^{\frac{k}{2}-1}.
\]
Applying M\"{o}bius inversion as done in (\ref{mobius}), we see that the number of conjugacy classes of commutators in $G_1 * G_2$ with length $k$ is roughly comparable to the square root of the number of all length-$k$ conjugacy classes with trivial abelianization.

\subsection{Geometric applications}
Counting conjugacy classes of commutators has a topological application. Let $X$ be a connected CW-complex with fundamental group $\pi_1(X)$, and let $\cC$ be a conjugacy class of $\pi_1(X)$ with trivial abelianization, corresponding to the free homotopy class of a homologically trivial loop $\gamma : S^1  \to X$. Then, the \emph{commutator length} of $\cC$, defined as the minimum number of commutators whose product is equal to an element of $\cC$, is also the minimum genus of an orientable surface (with one boundary component) that continuously maps to $X$ so that the boundary of the surface maps to $\gamma$~\cite[Section 2.1]{calegari}. Thus, a conjugacy class of $\pi_1(X)$ is comprised of commutators if and only if its corresponding free homotopy class $\gamma: S^1 \to X$
satisfies the following topological property:
\[
\text{\textit{There exists a torus $Y$  with one boundary component }}\tag{$\star$} 
\]

\vspace{-10pt}

\begin{center}
\textit{and a continuous map $f : Y \to X$ satisfying $f(\partial Y)=\Im \gamma$.}
\end{center}

\noindent Consequently, we obtain the following. 
\begin{corollary}\label{cor1}
Let $X$ be a connected CW-complex.
\begin{enumerate}
\item Suppose $X$ has fundamental group $F_r$ with a symmetric set of free generators $\cS$. Then, the number of free homotopy classes of   loops $\gamma : S^1 \to X$ with length $k$ (in the generators of $\cS$) satisfying Property ($\star$) is given by
\[
\frac{(2r-2)^2(2r-1)^{\frac{k}{2}-1}}{96r} \left(k^2 + O_r\left(k \right)  \right),
\] 
where the implied constant  depends only on $r$ and is effectively computable.

\item  Suppose $X$ has fundamental group $G_1 * G_2$ with the set of generators \[
\cS\defeq (G_1 \setminus \{1\}) \cup (G_2 \setminus \{1\}).
\] 
 Then, the number of free homotopy classes of   loops $\gamma : S^1 \to X$ with length $k$ (in the generators of $\cS$) satisfying Property ($\star$)  is given by
\begin{align*}
\frac{1}{192}\left(\goc\left( \left|G_2\right|-2 \right)^2 +\left( \left|G_1\right|-2 \right)^2 \gtc \right)
& k^2  \goc^{\frac{k}{4}-1} \gtc^{\frac{k}{4}-1} \\  + O_{|G_1|,|G_2|}&\left( k\goc^{\frac{k}{4}} \gtc^{\frac{k}{4}}  \right),
\end{align*}
where the implied constant depends only on $|G_1|$ and $|G_2|$, and is effectively computable. 
\end{enumerate}
\end{corollary}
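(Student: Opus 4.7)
The plan is to reduce the corollary to Theorems~\ref{thm1} and~\ref{thm2} via the standard topological--algebraic dictionary. In both parts (1) and (2), once Property ($\star$) is translated into a purely group-theoretic statement about $\pi_1(X)$, the asymptotic counts are immediate from the preceding theorems.

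First, I would invoke the standard bijection between free homotopy classes of loops $\gamma : S^1 \to X$ and conjugacy classes of $\pi_1(X)$: after fixing a basepoint $x_0 \in X$ and a path from $x_0$ to the basepoint of $\gamma$, the loop $\gamma$ determines an element of $\pi_1(X, x_0)$ that is well-defined up to conjugation. Under this identification, the length of a free homotopy class in the generating set $\cS$ coincides with the length of the corresponding conjugacy class as defined in Section~\ref{sec:intro}, since both are the infimum of word length over representatives.

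Next, I would translate Property ($\star$) using the Calegari result recalled just before the corollary: the commutator length of a conjugacy class $\cC$ of $\pi_1(X)$ equals the minimum genus of an orientable surface with one boundary component admitting a continuous map to $X$ that sends the boundary to a representative loop of $\cC$. A torus with one boundary component is an orientable genus-$1$ surface, so the existence of a map $f:Y\to X$ as in Property ($\star$) is equivalent to the conjugacy class of $\gamma$ having commutator length at most $1$. Since the identity is itself the commutator $[1,1]$, a conjugacy class has commutator length at most $1$ if and only if it is a conjugacy class of commutators. Hence free homotopy classes satisfying Property ($\star$) are in length-preserving bijection with conjugacy classes of commutators in $\pi_1(X)$.

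Finally, I would apply Theorem~\ref{thm1} to the case $\pi_1(X)=F_r$ to obtain part (1) and Theorem~\ref{thm2} to the case $\pi_1(X)=G_1 * G_2$ to obtain part (2); in each case the asymptotic formula carries over verbatim because the bijection in the previous step preserves the length filtration. The closest thing to an obstacle is the Property ($\star$) translation step, and even that reduces to unpacking the cited geometric interpretation of commutator length; the unique trivial conjugacy class (corresponding to the nullhomotopic loop) contributes at most a $O(1)$ correction that is absorbed into the error term.
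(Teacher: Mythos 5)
Your argument is correct and is precisely the route the paper takes: the corollary is stated immediately after the paper recalls the bijection between free homotopy classes of loops and conjugacy classes of $\pi_1(X)$, together with Calegari's interpretation of commutator length as minimal genus, and then simply invokes Theorems~\ref{thm1} and~\ref{thm2}. Your extra remark that commutator length $\le 1$ is equivalent to being a commutator (absorbing the identity) is the same implicit step the paper makes, so there is no gap.
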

 
For a connected CW-complex with a natural geometric definition of length for loops, one expects the word length of a free homotopy class of loops to correlate with the geometric length of the free homotopy class, taken to be the infimum of the geometric lengths of its loops, in some way. A trivial example of this phenomenon is in the case of a wedge $\bigvee_{j=1}^r S^1$ of $r$ unit circles, for which there is perfect correlation; a free homotopy class in this space has word length $k$ if and only if the minimal geometric length of a loop in the class is $k$. 

A more interesting example of the connection between word length and geometric length arises from hyperbolic geometry. Let $X\cong \Gamma \backslash \HH$ be a hyperbolic orbifold, where $\Gamma \subset \PSL_2(\RR)$ denotes a Fuchsian group. Then, every free homotopy class of loops in $X$ that does not wrap around a cusp has a unique geodesic representative, so the geometric length of the class is realized as the geometric length of this unique closed geodesic~\cite[Theorem 1.6.6]{buser}. In particular, we have a canonical correspondence between hyperbolic free homotopy classes of loops (i.e., ones that do not wrap around a cusp) and closed geodesics. Let us specialize to the case that $X$ is a pair of pants (surface obtained by removing three disjoint open disks from a sphere). We have that $X\cong\Gamma \backslash \HH$ for $\Gamma \cong F_2$, so fix a symmetric set of free generators. Then, Chas, Li, and Maskit~\cite{chas} have conjectured from computational evidence that the geometric length and the word length of a free homotopy class of loops are correlated in the following fundamental way.
\begin{conjecture}[Chas--Li--Maskit]
For an arbitrary hyperbolic metric $\rho$ on the pair of pants that makes its boundary geodesic, let 
$\cD_k$ denote the set of free homotopy classes with word length $k$ in a fixed choice of free generators for the fundamental group. Then, there exist constants $\mu$ and $\sigma$ (depending on $\rho$) such that for any $a<b$, the proportion of   $\cC \in \cD_k$ such that the geometric length $h(\cC)$ of the unique geodesic representative of $\cC$ satisfies
\[
\frac{h(\cC)-\mu k}{\sqrt{k}} \in [a,b]
\]
converges to 
\[
\frac{1}{\sigma \sqrt{2\pi}}\int_a^b e^{ -\frac{x^2}{2\sigma^2}}dx
\]
as $k\to\infty$. In other words, as $k\to \infty$, the distribution of geometric lengths $g(\cC)$ for $\cC\in \cD_k$ approaches the Gaussian distribution with  mean $\mu k$ and standard deviation $\sigma k$.
\end{conjecture}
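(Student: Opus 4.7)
My plan is to recast the problem in the language of symbolic dynamics. Cyclically reduced words $c_1 \cdots c_k \in F_2$ are exactly the length-$k$ periodic orbits of the topological Markov shift $\Sigma$ on the alphabet $\{a, a^{-1}, b, b^{-1}\}$ that forbids the four transitions $a \to a^{-1}$, $a^{-1} \to a$, $b \to b^{-1}$, $b^{-1} \to b$, and conjugacy classes in $\cD_k$ correspond to these orbits modulo cyclic rotation (with a negligible correction for imprimitive words, handled by M\"obius inversion as in~(\ref{mobius})). The fixed hyperbolic metric $\rho$ realizes $\pi_1(\text{pants}) \cong F_2$ as a non-elementary Fuchsian group with explicit hyperbolic generator matrices $M_a, M_b \in \PSL_2(\RR)$, so each cyclically reduced $w = c_1\cdots c_k$ determines $M_w = M_{c_1}\cdots M_{c_k}$, and the unique closed geodesic in the class has length $h(\cC) = 2\log\lambda(M_w) = 2\log\|M_w\| + O(1)$.

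The first step is to verify existence of the drift $\mu$. The Ruelle--Perron--Frobenius operator $\cL$ on the Banach space of Lipschitz functions on $\Sigma$ associated to the locally constant potential $\log 3$ has a simple leading eigenvalue with spectral gap, yielding the Parry measure of maximal entropy, which on $\cD_k$ reduces to the uniform distribution. The log-norm cocycle $\Phi(w) = \log\|M_w\|$ is subadditive, and Kingman's theorem produces the Lyapunov exponent $\mu = \lim_k \Phi(w)/k$ almost surely and in $L^1$. Combined with the known growth of $|\cD_k|$ (a specialization of the $r=2$ case of the first author's framework) and standard large-deviation estimates, this gives $h(\cC)/k \to \mu$ in probability under the uniform measure on $\cD_k$.

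The second step is the central limit upgrade, for which I would apply the Nagaev--Guivarc'h perturbation method. Consider the tilted transfer operator $\cL_{it}$ on H\"older functions over the projectivized extension $\Sigma \times \PP^1(\RR)$, weighted along each transition by $e^{it \cdot \log(\|M_c v\|/\|v\|)}$; this linearizes the log-norm cocycle into an additive cocycle on the projective bundle. Analyticity of $t \mapsto \cL_{it}$, combined with the spectral gap at $t=0$, gives an analytic leading eigenvalue $\lambda(t)$ with $\lambda(0)=1$, $\lambda'(0) = i\mu$, and $\lambda''(0) = -(\mu^2 + \sigma^2)$, provided $\sigma^2 > 0$. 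The latter nondegeneracy follows from a cohomological argument: the log-norm cocycle fails to be cohomologous to a constant because the image of $\rho$ is Zariski-dense in $\PSL_2(\RR)$ for every hyperbolic metric on the pair of pants with geodesic boundary. Fourier inversion of the resulting asymptotics of the characteristic function of $(h(\cC)-\mu k)/\sqrt{k}$ then delivers the Gaussian limit claimed in the conjecture.

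The hard part, and where the argument is most delicate, will be the bookkeeping connecting the group-theoretic count of conjugacy classes of word length exactly $k$ to the dynamical count of periodic orbits of the tilted transfer operator. Three points merit particular care: controlling the $O(1)$ discrepancy between $2\log\lambda(M_w)$ and the linearized additive cocycle \emph{uniformly} over $w$, which requires Pesin-style estimates or a Bowen-series coding that diagonalizes the matrix action along the orbit; ensuring quasi-compactness of $\cL_{it}$ on a suitable projective H\"older space, which is the analytic engine behind the Guivarc'h--Le Page--Raugi theory of products of random matrices; and finally showing that the spectral data remain stable as $\rho$ varies in the Teichm\"uller space of such metrics, so that $\mu$ and $\sigma$ depend continuously (indeed real-analytically) on $\rho$.
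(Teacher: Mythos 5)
This statement appears in the paper as a conjecture, attributed to Chas, Li, and Maskit, and is \emph{not} proven there; a footnote cites a preprint of Gekhtman, Taylor, and Tiozzo for its resolution, so there is no internal proof to compare against. Taken on its own, your sketch points in the right general direction---a thermodynamic-formalism central limit theorem is the right tool---but two structural choices create friction that you would then have to pay for.

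First, routing the geometric length through the log-norm cocycle $\log\|M_w\|$ on the projectivized bundle $\Sigma \times \PP^1(\RR)$ imports the Guivarc'h--Le Page--Raugi machinery, which is designed for i.i.d.\ or stationary random matrix products, and forces you to fight the $O(1)$ discrepancy between $\log\|M_w\|$ and the spectral radius $\log\lambda(M_w)$. A pair of pants with geodesic boundary is a Schottky quotient, so the Bowen--Series coding realizes each conjugacy class as a period-$k$ orbit of a Markov expanding map $T$ on the limit set, and $h(\cC)$ is \emph{exactly} the Birkhoff sum of the geometric potential $\log|T'|$ over that orbit (no $O(1)$ slack; coboundaries vanish on periodic orbits). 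This eliminates the projective bundle, Kingman, and the Lyapunov-exponent detour entirely, and puts the tilted transfer operator on $\Sigma$ alone. Second---and this is the genuine gap---Nagaev--Guivarc'h gives a CLT for $S_k f$ under the shift-invariant Parry measure, whereas the conjecture asserts one under the uniform counting measure on periodic orbits of period exactly $k$. These are not the same measure, and the "Fourier inversion" step at the end of your sketch is precisely where the identification has to be carried out: one needs a trace-formula or Ruelle zeta-function argument giving $\sum_{\sigma^k x = x} e^{itS_k f(x)} \sim \lambda(it)^k$ with uniformity for $t = O(1/\sqrt{k})$, so that dividing by $|\cD_k| \sim 3^k/k$ and applying L\'evy continuity produces the Gaussian. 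This is known to be fillable (Lalley; Pollicott--Sharp), but as written it is the load-bearing step left implicit. Finally, for $\sigma^2 > 0$ the cleaner argument is not Zariski density of the representation per se, but that $\log|T'|$ is not cohomologous to a constant multiple of $\mathbf{1}$---equivalently, that the length spectrum of a hyperbolic pair of pants is not an arithmetic progression against word length, which fails already on a short list of explicit commutators.
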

\noindent It is natural to expect that even when  restricting to  commutators, the resulting distribution would still exhibit a fundamental correlation between geometric length and word length. If this were proven true, then one could use Corollary~\ref{cor1} to indirectly count closed geodesics in $X$ with geometric length $\le L$ satisfying Property ($\star$).

For a general hyperbolic orbifold $\Gamma \backslash \HH$, we have the aforementioned natural correspondence between the closed geodesics of $\Gamma \backslash \HH$ and the hyperbolic conjugacy classes of $\Gamma$. In the case that $\Gamma$ is finitely generated, we claim that the number of non-hyperbolic conjugacy classes of commutators with word length $k$ is small. Indeed, $\Gamma$ only has finitely many elliptic conjugacy classes. Moreover, $\Gamma$ only has finitely many primitive parabolic conjugacy classes, corresponding to equivalence classes of cusps. Denoting the primitive parabolic conjugacy classes by $\{\cC_i\}_{1\le i \le n}$, we can conjugate  each $\cC_i$ by elements in $\Gamma$ to assume without loss of generality that the representative $g_i$ is fully cyclically reduced. Then, since every parabolic conjugacy class is a power of a primitive one, we see that the number of parabolic classes with word length $k$ is at most $n$. 

Suppose further that $\Gamma$ is isomorphic to $F_r$ (respectively, to $G_1 * G_2$ for two nontrivial finite groups $G_1$ and $G_2$) and fix a set of generators. We see that the number of non-hyperbolic conjugacy classes of commutators with length $k$ is bounded by a constant, and thus clearly dominated by the error term of $\ll k(2r-1)^{k/2}$ (respectively, $\ll k(|G_1|-1)^{k/4}(|G_2|-1)^{k/4}$).  In fact, it is known~\cite{notpower} that a commutator of a free group cannot be a proper power, which shows that the number of non-hyperbolic conjugacy classes of commutators is finite in this case. For the latter case of $\Gamma\cong G_1 * G_2$, the work of Comerford, Edmunds, and Rosenberger~\cite{maybepower} gives the possible forms that a proper-power commutator can take in a general free product. After discarding the $O(1)$ number of non-hyperbolic conjugacy classes of commutators with word length $k$, one obtains from Corollary~\ref{cor1} the asymptotic number of closed geodesics $\gamma$ with word length $k$ satisfying Property ($\star$), with the error term unchanged. It is natural to expect for $\Gamma \backslash \HH$ a correlation between the word length and the geometric length similar to that conjectured for the pair of pants. If such a correlation were proven true, then Corollary~\ref{cor1} would yield information on the number of  closed geodesics in $\Gamma \backslash \HH$ with length $\le L$ satisfying Property ($\star$).

For an example of such a hyperbolic orbifold $\Gamma \backslash \HH$ whose fundamental group $\Gamma$ is isomorphic to $G_1 *G_2$, we introduce the Hecke group $H(\ld)$ for a real number $\ld>0$, defined by the subgroup of $\PSL_2(\RR)$ generated by 
\begin{equation*}
S \defeq \begin{pmatrix}\label{matrixgenerators}
0 & -1 \\
1 & 0
\end{pmatrix} \hspace{24pt} \text{ and } \hspace{24pt} T_{\ld}  \defeq \begin{pmatrix} 
1 & \ld \\
0 & 1 
\end{pmatrix}.
\end{equation*}
Denote $\ld_q \defeq 2\cos(\pi/q)$ for integers $q \ge 3$. Hecke~\cite{hecke} proved that $H(\ld)$ is Fuchsian if and only if $\ld =\ld_q$ for some integer $q \ge 3$ or $\ld \ge 2$. Specializing to the former case, $H(\ld_q)$ has the presentation
\[
\langle S, R_\ld : S^2 = R_\ld^q = I \rangle \cong \ZZ/2\ZZ * \ZZ/q\ZZ,
\]
where the generator $R_\ld$ can be taken to be  $S T_\ld$~\cite{nexthecke}. Thus, our desired example $H(\ld_q) \backslash \HH$ has fundamental group isomorphic to $\ZZ/2\ZZ * \ZZ/q\ZZ$. In particular, note that $H(\ld_3) = \PSL_2(\ZZ)$ (with the standard generators $S$ and $T_1$), which has a presentation as the  free product $\ZZ/2\ZZ * \ZZ/3\ZZ$. 

\begin{note}
The Chas--Li--Maskit conjecture has recently been proven in a preprint~\cite{gtt} by  Gekhtman,  Taylor, and Tiozzo.
\end{note}

\subsection{Connections to number theory}
Counting closed geodesics on hyperbolic orbifolds $\Gamma \backslash \HH$ by geometric length  is a well-studied problem that often has analogies (in both the asymptotic of the resulting counting function and the techniques used to prove it) to counting problems in number theory. To illustrate, the primitive hyperbolic conjugacy classes of $\Gamma$ correspond precisely to the primitive closed geodesics of $\Gamma \backslash \HH$. These are called prime geodesics because when ordered by trace, they satisfy equidistribution theorems analogous to those of prime numbers, such as the prime number theorem (generally credited to Selberg \cite{selberg}, while its analogue for surfaces of varying negative curvature was proven by Margulis \cite{margulis}) and Chebotarev's density theorem (proven by Sarnak \cite{sarnak}).  Specifically, this analogue of the prime number theorem is called the prime geodesic theorem, which states that the number of prime geodesics $\gamma \in \Gamma \backslash \HH$ with norm $\le N$ is asymptotically given by $\sim N/\log N$, where the norm $n(\gamma)$ denotes the real number $\rho > 1$ such that the Jordan normal form of the $\PSL_2(\RR)$-matrix corresponding to $\ell$ is $\begin{psmallmatrix} 	\pm \rho	&	0	\\	0	&	\pm\rho^{-1}	\end{psmallmatrix}$, which is related to the geometric length $\ell(\gamma)$ by the relation (see~\cite[p. 384]{mac}) given by
\begin{align}\label{normtrace}
\ell(\gamma)=\log n(\gamma).
\end{align}
Furthermore, Sarnak's analogue of Chebotarev's density theorem implies that for every surjective homomorphism $\varphi: \Gamma \to G$ onto a finite abelian group  $G$, the number of prime geodesics of $\Gamma \backslash \HH$ with norm $\le N$ in the inverse image of a given $g \in G$ is asymptotically given by $\sim N/(|G| \log N)$. In particular, if $[\Gamma, \Gamma]$ is of finite index in $\Gamma$, then letting $\phi$ be the quotient map $\Gamma \to \Gamma/[\Gamma.\Gamma]$, one sees that the number of prime geodesics of $\Gamma \backslash \HH$ with norm $\le N$ and trivial abelianization is asymptotically given by 
\[
\sim \frac{N}{ \left|\Gamma : [\Gamma ,\Gamma]\right| \log N}.
\]
The result of Corollary~\ref{cor1} can be seen as the answer to an analogous problem of counting closed geodesics of $\Gamma \backslash \HH$ that correspond to conjugacy classes of commutators rather than arbitrary conjugacy classes with trivial abelianization, the main difference being that our formula counts closed geodesics ordered by word length.

We mention one more number-theoretic setting which involves counting conjugacy classes of commutators of a Fuchsian group, specialized to the modular group $\Gamma = \PSL_2(\ZZ) = H(\ld_3)$. A commutator of $\Gamma$ is precisely a coset $\{C, -C\}$ for a commutator $C=ABA^{-1}B^{-1}$ of $\SL_2(\ZZ)$, where $A, B \in \SL_2(\ZZ)$; this choice is unique because $-I$ has nontrivial abelianization, which implies that $-C$ does, as well. Thus, a conjugacy class of commutators in $\Gamma$  with such a representative $\{C, -C\}$  is the union of the $\SL_2(\ZZ)$-conjugacy class of the $\SL_2(\ZZ)$-commutator $C$ and the $\SL_2(\ZZ)$-conjugacy class of $-C$.  

For the free group $F_2$ on generators $X$ and $Y$, the map $\Hom(F_2, \SL_2(\CC)) \to \CC^3$ defined by $\rho \mapsto (\Tr \rho(X), \Tr \rho(Y),\Tr \rho(XY))$ induces an isomorphism between the moduli space of $\SL_2$-valued representations of $F_2$ and the affine space $\CC^3$, by the work of Vogt~\cite{vogt} and Fricke~\cite{fricke}. Labeling this parametrization of  $\CC^3$ as $(x,y,z)$, we note that  $\Tr \rho(XYX^{-1}Y^{-1})$ is given by the polynomial $M(x,y,z)\defeq x^2+y^2+z^2-xyz-2$; in other words, we have the trace identity~\cite[p. 337]{trace}
\begin{equation}
\label{fricke}
\Tr(A)^2+\Tr (B)^2+ \Tr (AB)^2 - \Tr( A) \Tr (B)\Tr (AB)-2 =\Tr(ABA^{-1}B^{-1})
\end{equation}
for all $A,B \in \SL_2(\CC)$. In particular, note that every conjugacy class of commutators in $\SL_2(\ZZ)$ (say, with representative $ABA^{-1}B^{-1}$ of $\SL_2(\ZZ)$ such that $A, B \in \SL_2(\ZZ)$) with a given trace $T$ gives rise to an integral solution
\[
(\Tr(A), \Tr(B), \Tr(AB))
\]
of the Diophantine equation $M(x,y,z)=T$. The integral solutions to the Markoff equation $M(x,y,z)=-2$ play an important role in the theory of Diophantine approximation through their relation to extremal numbers via the Lagrange--Markoff spectrum~\cite{markoff1, markoff2}, and have been studied recently in~\cite{sarnak1} by applying strong approximation.  Moreover, integral solutions to the Markoff-type equation $M(x,y,z)=T$ for varying $T$ are also of independent number-theoretic interest and have been studied recently in~\cite{sarnak2}. 

The conjugacy classes of $\PSL_2(\ZZ)$ with trace $T$ correspond to the $\SL_2(\ZZ)$-orbits of binary quadratic forms with discriminant $T^2-4$.  This allows one to use Gauss' reduction theory of indefinite binary quadratic forms, which yields a full set of representatives for the $\SL_2(\ZZ)$-orbits of binary quadratic forms of any positive discriminant, to obtain the conjugacy classes of $\PSL_2(\ZZ)$ with trace $T$. Furthermore, there is a straightforward condition determining whether each such conjugacy class is in the commutator subgroup $[\PSL_2(\ZZ),\PSL_2(\ZZ)]$, since $[\PSL_2(\ZZ),\PSL_2(\ZZ)]$ is precisely
\begin{align*}
\Bigg\{  \overline{\begin{pmatrix} 
a & b \\
c &  d
\end{pmatrix}} \colon   (1-c^2)&(bd+3(c-1)d+c+3)+c(a+d-3) \equiv 0 \pmod{12}  \label{congruence}
\\ \text{ or }  (1-&c^2)(bd+3(c+1)d-c+3)+c(a+d+3) \equiv 0 \pmod{12} \Bigg\} 
\end{align*}
a congruence subgroup of index $6$; this follows from the fact that $[\SL_2(\ZZ),\SL_2(\ZZ)] = \ker \chi$ for the surjective homomorphism $\chi : \SL_2(\ZZ) \to \ZZ/12\ZZ$ defined by 
\[
\begin{pmatrix} 
a & b \\
c &  d
\end{pmatrix}  \mapsto  (1-c^2)(bd+3(c-1)d+c+3)+c(a+d-3),
\]
as shown in~\cite[Proof of Theorem 3.8]{kconrad}. However, there is no known number-theoretic condition determining whether a conjugacy class of $\PSL_2(\ZZ)$ with trivial abelianization is in fact a commutator; the only known method to do so is to decompose an element of the conjugacy class into a word in the generators $S$ and $T_1$, then combinatorially checking whether there exists a cyclic conjugate of this word in the form given by Definition~\ref{wicksdef2}. For an implementation of this algorithm in \texttt{magma}, see~\cite[Appendix]{seniorthesis}. 

Since every commutator of $\SL_2(\ZZ)$ with trace $T$ gives rise to an integral point on $M_{T}$, the natural follow-up question is whether the converse is true. A local version of this holds, in that for every power $p^n$ of a prime $p \ge 5$, there exists a commutator $ABA^{-1}B^{-1}$ satisfying (\ref{fricke}) $\mod p^n$~\cite[Lemma 6.2]{sarnak2}. The global version of the converse, however, does not hold in general, nor do we know of any number-theoretic condition determining whether an integral point on $M_{T}$  comes from a commutator. Then, a question of Sarnak~\cite{sarnak3} asks:
\begin{center}
\textit{What is the asymptotic number of integral points on $M_{T}$ that arise from $\SL_2(\ZZ)$-commutators by the trace identity (\ref{fricke}) as $|T| \to \infty$?  }
\end{center}
On this front, one can consider $\SL_2(\ZZ)$-commutators as elements of $\PSL_2(\ZZ)\cong \ZZ/2\ZZ * \ZZ/3\ZZ$, for which Theorem~\ref{thm2} gives the asymptotic number of conjugacy classes of commutators with a given word length in the generators. Any correlation between the word length of a hyperbolic conjugacy class $\gamma$ of  $\PSL_2(\ZZ)$ and the geometric length of the corresponding geodesic in $\Gamma \backslash \HH$ would also yield correlation between the word length of $\gamma$ and its trace $t(\gamma)$, defined to be the absolute value of the trace of any representative matrix. This is due to the relation between $t(\gamma)$ and $\ell(\gamma)$ given by
\[
t(\gamma) = 2\cosh \frac{\ell(\gamma)}{2},
\]
which is equivalent to the relation~(\ref{normtrace}). Thus, a proof of such a correlation would allow Theorem~\ref{thm2} to yield information on the number of integral points  on  $M_{T}$ and $M_{-T}$ for $T > 2$  (the condition for hyperbolicity, where we have without loss of generality taken the representative trace  $T$ to be positive)  arising from commutators by (\ref{fricke}).

\subsection{Strategy of proof}
The main idea behind the proof of Theorem~\ref{thm1} given in Section~\ref{sec:thm1} (respectively, the proof of Theorem~\ref{thm2} given in Section~\ref{sec:thm2}) is to count the Wicks commutators of $F_r$ (respectively, those of $G_1 * G_2$) and show that each such Wicks commutator is on average cyclically conjugate to precisely six Wicks commutators (including itself) while keeping the overall error term under control. Thus, we obtain the asymptotic number of conjugacy classes of commutators with length $k$ by dividing the asymptotic number of Wicks commutators of length $k$ by $6$. Finally, in Section~\ref{sec:conclusion}, we conclude the paper with a discussion of future  directions for generalization.

\section{Proof of Theorem~\ref{thm1}}\label{sec:thm1}
\subsection{Counting the Wicks commutators of $F_r$}
Since the cyclically reduced conjugacy representative of $F_r$ is unique up to cyclic permutation, it suffices to count equivalence classes (with respect to cyclic permutation) of Wicks commutators of length $k=2X$. Let $R_X$ denote the set of reduced words of length $X$, of which there are $2r  (2r-1)^{X-1}$. For each such word $W$, the number of ways to decompose $W$ into $A, B,$ and $C$ (i.e., $W=ABC$ without cancellation) is given by the number of ordered partitions $p$ of $X$ into three (not necessarily nontrivial) parts. 

Define  $(W,p)$ to be a $\emph{viable pair of length $n$}$ if $W$ is a length-$n$ reduced word,  $p=(n_1,n_2,n_3)$ is a partition of $n$ into three parts  $n_1, n_2, n_3>0$, and the  word 
\[
W'\defeq ABCA^{-1}B^{-1}C^{-1}
\]
(obtained by decomposing $W$ as specified by $p$, in the method described above) is a Wicks commutator. 
It is natural to first count the viable pairs $(W,p)$ of length $X$. To do so, we will need the following.

\begin{lemma}
In the set of all reduced words of length $n$ beginning with a given letter, the proportion of words ending in any one letter is given by
\[
\frac{1}{(2r-1)^{n-1}} \cdot \frac{(2r-1)^{n-1}+O(r)}{2r  } =  \frac{1}{2r} + O\left(\frac{1}{(2r-1)^{n-1}}\right).
\]
\end{lemma}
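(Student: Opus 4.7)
The plan is to fix a starting letter $a$ and, for each $b$ in the generating set $\cS$, let $f_n(b)$ denote the number of reduced words of length $n$ that start with $a$ and end with $b$. The lemma asserts
\[
f_n(b) = \frac{(2r-1)^{n-1}}{2r} + O(1),
\]
which after dividing by the total count $(2r-1)^{n-1}$ yields the claimed proportion. The whole proof will rest on a short recursion together with one telescoping step.

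The first step is the one-step recursion. By stripping the last letter off a reduced word of length $n$ that ends in $b$, I obtain a reduced word of length $n-1$ starting with $a$ whose final letter is anything other than $b^{-1}$. Since the total number of reduced words of length $n-1$ starting with $a$ is $(2r-1)^{n-2}$, this gives
\[
f_n(b) = (2r-1)^{n-2} - f_{n-1}(b^{-1}) \qquad (n\ge 2).
\]
Substituting the same recursion for $f_{n-1}(b^{-1})$ eliminates the alternation between $b$ and $b^{-1}$ and produces the two-step recursion
\[
f_n(b) - f_{n-2}(b) = (2r-1)^{n-2} - (2r-1)^{n-3} = (2r-2)(2r-1)^{n-3} \qquad (n\ge 3).
\]

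The second step is to telescope. Iterating the two-step recursion from the appropriate base case ($n=1$ if $n$ is odd, $n=2$ if $n$ is even) collapses the right-hand side as a geometric sum that evaluates to $\frac{(2r-1)^{n-1}-(2r-1)^{\eps}}{2r}$ for $\eps\in\{0,1\}$ depending on the parity of $n$. A direct check of the four base cases (the parity of $n$ crossed with whether $b$ equals $a$, equals $a^{-1}$, or neither) shows that the residual term $f_n(b) - \frac{(2r-1)^{n-1}}{2r}$ is bounded in absolute value by $1$, uniformly in $n$ and $r$. Consequently
\[
f_n(b) = \frac{(2r-1)^{n-1} + O(r)}{2r},
\]
where the implicit constant in $O(r)$ is absolute (in fact the hidden constant in $O(1)$ is $\le 1$). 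Dividing by $(2r-1)^{n-1}$ then gives the lemma.

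There is no real obstacle here; the only thing to be careful about is that the $O(\cdot)$ is uniform in $r$ as well as in $n$, which follows from the absolute bound on the residual term obtained in the telescoping step. One could alternatively prove the statement by diagonalizing the transition matrix $M = J - P$ (where $J$ is the all-ones matrix and $P$ is the involution $b \mapsto b^{-1}$) using the splitting into $P$-eigenspaces, which yields the spectrum $\{2r-1, +1, -1\}$ with multiplicities $\{1, r, r-1\}$ and makes the bound transparent; but the elementary two-step recursion seems the cleanest route.
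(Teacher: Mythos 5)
Your proof is correct, but it takes a genuinely different and more streamlined route than the paper's. The paper begins from the known count $(2r-1)^n+(r-1)(-1)^n+r$ of cyclically reduced words of length $n$, rewrites this as a formula for $a_n$ (the number of length-$n$ reduced words whose first and last letters are inverses), then shows that both $a_n$ and the analogous quantity $b_n$ (first and last letters equal) satisfy the inhomogeneous two-step recursion $c_n=c_{n-2}+2r(2r-2)(2r-1)^{n-3}$, so that $b_n-a_n$ solves the homogeneous recursion $d_n=d_{n-2}$ and can be pinned down by initial data; finally it divides $a_n$, $b_n$ by $2r$ and fills in the remaining case by subtraction and division by $2r-2$. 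Your argument, by contrast, works directly with the fully refined count $f_n(b)$ (fixed initial letter, fixed final letter), derives the one-step recursion $f_n(b)=(2r-1)^{n-2}-f_{n-1}(b^{-1})$ by stripping the last letter, and iterates once to decouple $b$ from $b^{-1}$, yielding the same inhomogeneous two-step recursion at a finer level of resolution. Your version is more self-contained (it does not import the cyclically-reduced-word count from the literature), it avoids the detour through the totals $a_n$, $b_n$ and the homogeneous-recursion trick, and the uniform $O(1)$ bound falls out transparently from the base cases $f_1$, $f_2$; the paper's version has the modest advantage of reusing a standard formula already cited elsewhere in the paper and of producing exact closed forms for $a_n$ and $b_n$ along the way. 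Both routes land on the same numerical answer, and your error-term bookkeeping (residual at most $1$ in absolute value, uniformly in $n$ and $r$) is consistent with what the paper's explicit formulas give.
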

\begin{proof}
The number of cyclically reduced words with length $n$ is $(2r-1)^n+(r-1)(-1)^n+r$ (see~\cite[Proposition 17.2]{conj}). Another way of stating this is that the number $a_n$ of reduced words of length $n$ whose initial  and final letters are inverses (i.e., are not cyclically reduced) is 
\begin{align*}
a_n = 2r(2r-1)^{n-1}-(2r-1)^n&-(r-1)(-1)^n-r
\\&=(2r-1)^{n-1}-(r-1)(-1)^n-r.
\end{align*}
Similarly, let  $b_n$ denote the number of reduced words whose initial  and final letters are equal. Note that $\{a_n\}_{n\in \ZZ_{>0}}$ and $\{b_n\}_{n\in \ZZ_{>0}}$ are solutions to the recurrence 
\begin{align*}
c_n=(2r-1)c_{n-2}+(2r-2)&(2r(2r-1)^{n-3}-c_{n-2})
\\&=c_{n-2}+2r(2r-2)(2r-1)^{n-3}.
\end{align*}
Indeed, for $n\ge 3$, a reduced word whose initial and final letters are inverses is precisely of the form $sWs^{-1}$, and there are $2r-1$ possible choices for the letter $s$ if $W$ also has the aforementioned property (initial and final letters are inverses) and $2r-2$ possible choices if $W$ does not; an analogous discussion holds for the property that the initial and final letters are equal. It follows that the difference $\{b_n-a_n\}_{n\in \ZZ_{>0}}$ is a solution to the associated homogeneous recurrence $d_n=d_{n-2}$, and thus a linear combination of the functions $1$ and $n$. In light of the initial data $a_1=a_2=0$ and $b_1=b_2=2r$, we see that $b_n-a_n=2r$, which yields
\[
b_n=(2r-1)^{n-1}-(r-1)(-1)^n+r.
\]

For $s_1,s_2 \in \cS$, let $R_n^{s_1,s_2} \subset R_n$ denote the subset of words that begin with $s_1$ and end with $s_2$, which gives us a decomposition of $R_n$ into the disjoint union $R_n = \bigcup_{s_1,s_2\in \cS}R_n^{s_1,s_2}$. For any $s \in \cS$, 
\[
|R_n^{s,s^{-1}}|=\frac{a_n}{2r}=\frac{(2r-1)^{n-1}-(r-1)(-1)^n-r}{2r}
\]
and 
\[
|R_n^{s,s}|=\frac{b_n}{2r}=\frac{(2r-1)^{n-1}-(r-1)(-1)^n+r}{2r}.
\]
It follows that for $t \in \cS \setminus \{s,s^{-1}\}$,
\begin{align*}
|R_n^{s,t}| = \frac{(2r-1)^{n-1}-|R_n^{s,s^{-1}}|-|R_n^{s,s}|}{2r-2}=  \frac{1}{2r-2}\Bigg( (2r-1)^{n-1} -& \frac{(2r-1)^{n-1}-(r-1)(-1)^n}{r} \Bigg)\\&=\frac{(2r-1)^{n-1}+O(1) }{2r},
\end{align*}
which completes the proof of our claim.
\end{proof}

Now, fix a partition $p=(n_1,n_2,n_3)$ of $n$ into three parts  $n_1, n_2, n_3>0$, and consider $R_X$ with the uniform probability measure placed on its elements. Within the decomposition $W = ABC$ in accordance with $p$, let the first letter and last letter of $A$ respectively be $a_0$ and $a_1$, and define $b_0$, $b_1$, $c_0$, and $c_1$ similarly. We will compute the probability that $(W,p)$ is viable for a random $W \in R_X$, i.e., the probability that $ b_1 \neq a_0 ^{-1}$, $c_0 \neq a_0$, $c_1 \neq a_1 $, and $ c_1 \neq b_0^{-1}$. 

Suppose that $a_0=s$. Then, by our work above, the set of possible candidates for $a_1b_0$ is the $2r(2r-1)$-element set $S=\{wz : w, z \in \cS , w \neq z^{-1}\}$, each of which has probability 
\[
\frac{1}{2r-1}  \left(\frac{1}{2r}+O\left(\frac{1}{(2r-1)^{n_1-1}}\right)\right) = \frac{1}{2r(2r-1)}+O\left(\frac{1}{(2r-1)^{n_1}}\right).
\]
We fix a choice of $a_1b_0$ in $S$, and all probabilities from now on are conditional on this event. The set of possible candidates for $b_1c_0$ is also $S$, each of which has probability $1/2r(2r-1)+O(1/(2r-1)^{n_2})$. Let $S' \subset S$ be the subset of  possible candidates for $b_1c_0$ that satisfy the  conditions $a_0 \neq b_1^{-1}$ and $a_0 \neq c_0$. The cardinality of $S'$ can be computed as follows: there are $2r-1$ choices for $b_1$ satisfying $s \neq b_1^{-1}$, and conditional on this, there are $2r-2$ choices for $c_0$ satisfying $s \neq c_0$ and $b_0 \neq c_0^{-1}$, for a total of $(2r-1)(2r-2)$ elements of $S'$. Fix a choice of $b_1c_0$ in $S'$, and all probabilities from now on are conditional on this event. Since $a_1 \neq  b_0^{-1}$, the conditions  $c_1 \neq a_1 $ and $ c_1 \neq b_0^{-1}$ leave precisely $2r-2$ (out of $2r$) possible values for $c_1$, so the probability that $c_1$ satisfies these conditions is 
\[
(2r-2)\left (\frac{1}{2r} + O\left(\frac{1}{(2r-1)^{n_3-1}}\right) \right).
\]
 Overall, we have that the probability that $(W,p)$ is viable is
\begin{align*}
2r(2r-1)  \Bigg (\frac{1}{2r(2r-1)} &+ O\left(\frac{1}{(2r-1)^{n_1}}\right) \Bigg)
\\&\cdot (2r-1)(2r-2) \left (\frac{1}{2r(2r-1)} + O\left(\frac{1}{(2r-1)^{n_2}}\right) \right) \\ &\cdot (2r-2)\left (\frac{1}{2r} + O\left(\frac{1}{(2r-1)^{n_3-1}}\right) \right)\\ =
\left(\frac{2r-2}{2r}\right)^2   &\left (1+ O\left(\frac{1}{(2r-1)^{n_1-2}}\right) \right)  \left (1+ O\left(\frac{1}{(2r-1)^{n_2-2}}\right) \right) \\& \cdot \left (1+ O\left(\frac{1}{(2r-1)^{n_3-2}}\right) \right)
 \\ =\left(\frac{2r-2}{2r}\right)^2& \left (1+ O\left(\frac{1}{(2r-1)^{n_1-2} }+\frac{1}{(2r-1)^{n_2-2} }+\frac{1}{(2r-1)^{n_3-2} }\right) \right).
\end{align*}
Thus, the number of viable pairs $(W,p)$ of length $X$ is  given by
\begin{align*}
\sum_{n_1+n_2+n_3 = X}  &2r (2r-1)^{X-1}\left(\frac{2r-2}{2r}\right)^2 \\& \hspace{50pt}\cdot\left (1+ O\left(\frac{1}{(2r-1)^{n_1-2} }+\frac{1}{(2r-1)^{n_2-2} }+\frac{1}{(2r-1)^{n_3-2} }\right) \right)
\\ &=\frac{(2r-2)^2(2r-1)^{X-1}}{2r}   \Bigg( \frac{(X-2)(X-1)}{2} 
\\ &\hspace{50pt}+\sum_{n_1+n_2+n_3 = X} O\left(\frac{1}{(2r-1)^{n_1-2}} +\frac{1}{(2r-1)^{n_2-2}} +\frac{1}{(2r-1)^{n_3-2}} \right)  \Bigg) \\
 &=\frac{(2r-2)^2(2r-1)^{X-1}}{2r} 
  \left(\frac{(X-2)(X-1)}{2} + 3 \cdot O\left(\sum_{n_1+n_2+n_3 = X}  \frac{1}{(2r-1)^{n_1-2}} \right) \right)
  \\ &= \frac{(2r-2)^2(2r-1)^{X-1}}{2r}
 \Bigg(\frac{(X-2)(X-1)}{2} 
 \\&\hspace{50pt}+ O\left(\sum_{n_1=1}^{X-2}  (X-1-n_1)\frac{1}{(2r-1)^{n_1-2}} \right) \Bigg)
   \\ &= \frac{(2r-2)^2(2r-1)^{X-1}}{2r}\Bigg(\frac{(X-2)(X-1)}{2} 
   \\&\hspace{50pt}+ O\left(\frac{(2r-1)^2\left((2r-1)^{2-X}+X(2r-2)-4r+3\right)}{(2r-2)^2} \right) \Bigg)
  \\ &=\frac{(2r-2)^2(2r-1)^{X-1}}{4r} \left(X^2 + O\left( \frac{r^2}{(2r-1)^{X}}+rX\right)  \right),
\end{align*}
where all sums with $n_1+n_2+n_3 = X$ are taken over integers $n_1,n_2,n_3>0$.

There could \textit{a priori} be a Wicks commutator $ABCA^{-1}B^{-1}C^{-1}$ arising from distinct viable pairs, say $(W, p_1)$ and $(W, p_2)$ for $p_1=(n_1,n_2,n_3)$, and $p_2 = (m_1,m_2,m_3)$. We show that the number of such commutators is small. 

Let $W=ABC$ be its decomposition with respect to $p_1$, and $W=A'B'C'$ its decomposition with respect to $p_2$.  Consider the function $f : \{1, \ldots, X\} \to  \{1, \ldots, X\}^2$ that maps $i$ to $(j,k)$ in the following way: the $i$th letter of $A^{-1}B^{-1}C^{-1}=A^{\prime -1}B^{\prime -1}C^{\prime -1}$, when viewed in terms of the decomposition given by $p_1$, is the inverse of the $j$th letter of $W$, and  when viewed in terms of the decomposition given by $p_2$, is the inverse of the $k$th letter of $W$. For example, the first letter of $A^{-1}B^{-1}C^{-1}$ is defined to be the inverse of the $n_1$th letter when the decomposition is in terms of $p_1$, and is defined to be the $m_1$th letter of $W$ when in terms of $p_2$, so $f(1) = (n_1,m_1)$. We consider two cases: when the two entries of $f(i)$ are distinct for all $i$, and otherwise. In the first case, the following algorithm allows us to reduce the degrees of freedom for the letters of $W$ by at least half.
\begin{enumerate}
\item Let $i=1$. For $f(i)=(j_i,k_i)$, do the following:
\begin{itemize}
\item If neither the $j_i$th or the $k_i$th position has an indeterminate variable assigned to it, then assign a new indeterminate variable simultaneously to the $j_i$th and $k_i$th positions. This increases the number of indeterminate variables by two.

\item If just one of the $j_i$th and the $k_i$th positions has an indeterminate variable assigned to it, but the other does not, then assign this indeterminate variable to the former. 

\item If both the $j_i$th and the $k_i$th positions have the same indeterminate variable assigned to them, then make no changes. 

\item If the $j_i$th and the $k_i$th positions have distinct indeterminate variables assigned to them, then set these indeterminate variables equal to each other. This decreases the number of indeterminate variables by one.
\end{itemize}
\item Incrementing $i$ by one each time,  repeat this procedure for all $1 \le i \le X$. 
\end{enumerate}
By our hypothesis that the two entries of $f(i)$ are distinct for all $i$, the number of indeterminate variables, which precisely represents the number of degrees of freedom for the word $W$ such that $(W,p_1)$ and $(W,p_2)$ give rise to the same commutator, is $\le X/2$. It follows that there are only $O((2r-1)^{X/2})$ of such words for each pair $p_1, p_2$. 

Now, consider the next case that there exists an $i$ such that the two entries of $f(i)$ are equal. Then, we consider the following three cases for $W$, $p_1$, and $p_2$.

\emph{Case 1}. Suppose the smallest $i$ such that the two entries of $f(i)$ are equal satisfies that this entry is a position in $A$. Then, $n_1=m_1$ must be this entry and $i$ must equal $1$, since otherwise we can continue to decrement $i$ so that the two entries of $f(i)$ are incremented and remain equal, a contradiction. Next, the subwords $B^{-1}C^{-1}$ and $B'^{-1}C'^{-1}$ must be equal. Without loss of generality, suppose that $n_2 > m_2$. Decompose $B=B'D$ so that our condition $B^{-1}C^{-1}=B'^{-1}C'^{-1}$ is precisely $CB'D = DCB'$. Since $CB'$ and $D$ commute, we have that they are both powers of a common subword $V$. We can bound the number of $(W,p)$ satisfying this case by counting, for each $i=X-n_1$  and for each  divisor $d \mid i$ (denoting the length of $V$), the number of ways to divide the right subword of length $i$ into two subwords $B$ and $C$, and the number of degrees of freedom. Thus, the number of double-countable Wicks commutators in  this case can be bounded from above by
\begin{align*}
\sum_{i=1}^{X-1}\sum_{d \mid i} (i-d+1)& (2r-1)^{X-i+d} \\&=
(X-1)(2r-1)^X +\sum_{i=1}^{X-1}\sum_{d \mid i,  d \neq i } (i-d+1) (2r-1)^{X-i+d} 
\\ &\le  X(2r-1)^X +(2r-1)^X \sum_{i=2}^{X-1} \sum_{1\le d \le \frac{i}{2}} (i-d+1)  (2r-1)^{-i+d} 
\\&=  X(2r-1)^X +(2r-1)^X  \sum_{i=2}^{X-1} (2r-1)^{-i} \sum_{1\le d \le \frac{i}{2}} (i-d+1)  (2r-1)^{d} 
\\ &\le X(2r-1)^X+ (2r-1)^X \sum_{i=2}^{X-1} (2r-1)^{-i+1} 
\\&\hspace{30pt} \cdot \frac{ i (2r-2)\left( (2r-1)^{\frac{i}{2}} -2\right) +2(2r-1) \left( (2r-1)^{\frac{i}{2}} -1 \right)}{2(2r-2)^2}\\ &\ll X(2r-1)^{X} ,
\end{align*}
\noindent which is dominated by our error term. 

\emph{Case 2}. Suppose the smallest $i$ such that the two entries of $f(i)$ are equal satisfies that this entry is a position in $C$. An argument symmetric to the one above can be given to show that the above expression is also an upper bound for the number of  $(W,p)$ satisfying this case.

\emph{Case 3}. Suppose the smallest $i$ such that the two entries of $f(i)$ are equal satisfies that this entry is a position in $B$. Without loss of generality, suppose $n_1>m_1$. Then, $f(m_1+1)=(n_1-m_1,m_1+m_2)$, and by an argument similar to that in Case $(1)$, we have that the simultaneous entry of the aforementioned $f(i)$ must be $n_1+n_2$, with $n_1-m_1=n_3-m_3$ so that $f(m_1+1)=(n_1-m_1,n_1+n_2+(n_1-m_1))$. Thus, divide $W$ into $DEFGH$ so that $|D|+|E| = n_1$, $|G|+|H|=n_3$, and $|E|=|G|$. Then, $(W,p_1)$ gives rise to the commutator $WE^{-1}D^{-1}F^{-1}H^{-1}G^{-1}$, while $(W,p_2)$ gives rise to the commutator $WD^{-1}G^{-1}F^{-1}E^{-1}H^{-1}$. Since these are equal, it follows that $DE =GD$ and $GH=HE$. Note that if a word $\square$ satisfies the equality $\square  E= G \square$ without cancellation, then $\square$ is uniquely determined, since one can inductively identify the letters of $\square$ from left to right (or right to left). It follows that $D=H$. But this contradicts the assumption that $DEFGHD^{-1}G^{-1}F^{-1}E^{-1}H^{-1}$ is cyclically reduced.

Note that pairs $(W,p)$ such that $n_i=0$ for some $i \in \{1,2,3\}$ are counted in the above cases. This justifies our assumption of $n_1,n_2,n_3 >0$ in our earlier counting of the main term by passing to the task of counting viable pairs of length $X$, which \textit{a priori} only accounts for Wicks commutators $ABCA^{-1}B^{-1}C^{-1}$ such that $|A|,|B|,|C|>0$. We have thus shown the following.
\begin{lemma}
The total number of Wicks commutators with length $2X$ is given by
\[
\frac{(2r-2)^2(2r-1)^{X-1}}{4r} \left(X^2 + O_r\left( X\right)  \right).
\]
\end{lemma}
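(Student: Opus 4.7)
The plan is to assemble the lemma from the accounting steps built up above. The first step is to count viable pairs $(W,p)$ of length $X$ using the probability lemma on initial and final letters of reduced subwords. Specifically, I would condition letter by letter on the six boundary letters $a_0, a_1, b_0, b_1, c_0, c_1$ of $A$, $B$, and $C$, translating the four avoidance conditions $b_1 \neq a_0^{-1}$, $c_0 \neq a_0$, $c_1 \neq a_1$, $c_1 \neq b_0^{-1}$ into factors of $(2r-2)/(2r)$ with controlled multiplicative error terms. Summed over ordered partitions $p = (n_1,n_2,n_3)$ of $X$ with $n_i > 0$, this yields the main term $(2r-2)^2(2r-1)^{X-1}X^2/(4r)$ along with an additive error that is $O_r(X(2r-1)^{X-1})$.

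The second step is to bound the overcounting caused by a single Wicks commutator arising from more than one viable pair. Observing that the first half of the cyclically reduced word $ABCA^{-1}B^{-1}C^{-1}$ is exactly $W = ABC$, two viable pairs can yield the same commutator only if they share the same $W$. Hence I only need to estimate, summed over ordered pairs $(p_1, p_2)$ of distinct partitions of $X$, the number of $W \in R_X$ for which $(W, p_1)$ and $(W, p_2)$ determine the same Wicks commutator.

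To carry this out, I would introduce a bookkeeping map $f : \{1,\ldots,X\} \to \{1,\ldots,X\}^2$ recording, for the $i$-th letter of $A^{-1}B^{-1}C^{-1}$, which positions of $W$ it must equal the inverse of under each of the two decompositions. If the two entries of $f(i)$ are distinct for every $i$, a greedy consolidation of indeterminate variables shows that $W$ has at most $X/2$ degrees of freedom, contributing only $O((2r-1)^{X/2})$ words per pair $(p_1, p_2)$. Otherwise, I would split into three subcases by whether the shared entry of some $f(i)$ lies in the $A$-, $B$-, or $C$-subword of $W$ under $p_1$. The $A$- and (symmetric) $C$-subcases force certain subwords to commute, hence to be powers of a common subword, and a divisor-sum computation bounds the contribution by $O_r(X(2r-1)^{X})$. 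The $B$-subcase forces a subword equality $D = H$ that contradicts cyclic reducedness.

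The main obstacle I anticipate is the combinatorial bookkeeping in the commuting-subword subcases, where one must carefully separate the diagonal contribution $d = i$ in the divisor sum from the off-diagonal geometric contributions in $d$ to fit comfortably within the claimed error. A final subtle point is that degenerate partitions with some $n_i = 0$ fit naturally into the three-subcase analysis above, which justifies the $n_i > 0$ restriction used when counting viable pairs and ensures no residual main-term contributions are missed.
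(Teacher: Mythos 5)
Your proposal is correct and follows essentially the same route as the paper: count viable pairs via the boundary-letter probability lemma and a sum over ordered partitions, then bound double-counting by the bookkeeping map $f$ with the dichotomy between all entries distinct (giving $O((2r-1)^{X/2})$ per pair) versus some shared entry, the latter split into $A$-, $B$-, $C$-subcases with the commuting-subword/divisor-sum bound in the $A$ and $C$ cases and the cyclic-reducedness contradiction in the $B$ case. The treatment of degenerate partitions $n_i = 0$ also coincides with the paper's remark at the end of that section.
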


\subsection{A conjugacy class of commutators  contains six Wicks commutators on average}
We need to count the number of  conjugacy classes containing at least one Wicks commutator. Consider the conjugacy class $\cC$ of the Wicks commutator $W'=ABCA^{-1}B^{-1}C^{-1}$ arising from $(W,p)$, where $p=(n_1,n_2,n_3)$ with $n_1,n_2,n_3>0$; recall that the Wicks commutator not satisfying this latter hypothesis are negligible. Note that the minimum-length elements in a conjugacy class are precisely the cyclically reduced words, and that two cyclically reduced words are conjugate if and only if they are cyclically conjugate. The Wicks commutators 
\begin{align*}
BCA^{-1}B^{-1}C^{-1}A, CA^{-1}B^{-1}C^{-1}AB, &A^{-1}B^{-1}C^{-1}ABC, 
\\&B^{-1}C^{-1}ABCA^{-1}, \text{ and } C^{-1}ABCA^{-1}B^{-1}
\end{align*}
are conjugates of $W'$. We show that the number of other Wicks commutators in $\cC$ is on average negligible.

For an arbitrary $1 \le \ell \le n_3/2$ denoting the number of letters of the conjugation,  let $C=DEF$ be a decomposition without cancellation such that $|D|=|F|= \ell$. Label the letters of $W'$ by $A=a_1 \cdots a_{n_1}$, $B=b_1 \cdots b_{n_2}$, $D=d_1 \cdots d_\ell$, $E=e_1\cdots e_{n_3-2\ell}$, and $F =f_1\cdots f_\ell$.  Consider the cyclic conjugate 
\[
W''\defeq D^{-1}ABDEFA^{-1}B^{-1}F^{-1}E^{-1}
\]
of $W'$. We wish to show that on average, $W''$ is not a Wicks commutator. Suppose the  contrary, that there exists a partition $p'=(m_1,m_2,m_3)$ of $X$ into three parts such that
\[
W''=D^{-1}ABDEFA^{-1}B^{-1}F^{-1}E^{-1}= w_1w_2w_3w_1^{-1}w_2^{-1}w_3^{-1}
\]
for subwords $w_1, w_2,$ and $w_3$ of lengths $m_1, m_2,$ and $m_3$.

Label the letters of $A$ from left to right as $a_1, \ldots, a_{n_1}$, and label the letters of $B,C,D,E,$ and $F$ similarly. We have that $w_{1}, w_{2},$ and $w_{3}$ are subwords comprised of the letters 
\begin{equation}\label{letters1}
d_{\ell}^{-1}, \ldots, d_1^{-1}, a_1, \ldots, a_{n_1}, b_1, \ldots, b_{n_2}, d_1, \ldots, d_\ell, e_1, \ldots, e_{n_3-2\ell}.
\end{equation}
Then, note that the second half of $W'$ can be considered in two forms:
\[
F A^{-1}  B^{-1}  F^{-1}E^{-1}= w_{1}^{-1}  w_{2}^{-1}  w_{3}^{-1} . 
\]
Equivalently, this equality can be written as 
\begin{equation}\label{equality1}
EF B A F^{-1} = w_3 w_2 w_1,
\end{equation}
where we reiterate that we substitute the letters of (\ref{letters1}), in the correct order, for $w_3, w_2,$ and $w_1$.

Consider the function $g$ mapping the ordered set of symbols of the left-hand side,
\[
\cA\defeq \{e_1,\ldots,e_{n_3-2\ell}, f_1,\ldots,f_{\ell},
b_1,\ldots,b_{n_2},a_1,\ldots,a_{n_1}, f_{\ell}^{-1},\ldots,f_1^{-1}\},
\]
to the  ordered set $\cB$ of symbols of the right-hand side of (\ref{equality1}), which are just the symbols of (\ref{letters1}) reordered appropriately. Specifically, $g$  maps the $i$th leftmost letter of the left-hand side of (\ref{equality1}) to the $i$th leftmost letter of the right-hand side. 

First, suppose $g$ has no fixed points ($\mathfrak i$ such that $g(\mathfrak i)=\ii$). Then, use an algorithm similar to the previous one to conclude that there are $\le X/2$ degrees of freedom for $ABDEF$, so $W$ must be one of only $O((2r-1)^{X/2})$ choices (for each choice of $\ell$ and $p'$).

Now, suppose that there exists an $\mathfrak i$ such that $g(\mathfrak i)=\ii$. Such fixed points $\ii$ must be letters of $A$, $B$, or $E$. We first consider the case that all the fixed points are letters of only one of $A$, $B$, and $E$. In this case, we consider the following subcases for $W$, $p$, and $p'$: 

\emph{Case 1}. Suppose that the fixed points are letters of $E$. Then, all of the fixed points must be in one of $w_2$ and $w_3$; they cannot be in $w_1$, since this would mean that $w_1$ contains $e_1$, but $e_1$ is then necessarily located at different positions in the left-hand side and right-hand side of (\ref{equality1}). Suppose that the fixed points of $E$ are in $w_3$. Then, in order for the letters of $E$ to match, we require that $w_3=E$. This means that $g(f_1)$ is the first letter of $w_2$, which is adjacent to the last letter of $w_1$. But the last letter of $w_1$ is $g(f_1^{-1})$, which shows that we have adjacent letters that are inverses. This contradicts the fact that $W'$ is  cyclically reduced. 

Next, suppose all the fixed points are in $w_2$. Then, we  must have that $m_3=n_3-2\ell-(m_2+m_3)$ so that the first letter of $w_2$ is at the same position in both the left-hand and right-hand side. Thus, $m_2+2m_3=n_3-2\ell$, which means there are $\le (n_3-2\ell)/2$ choices for $p'$ parametrized by $m_3 \le(n_3-2\ell)/2$ . For each such choice of $p'$, there are $m_2=n_3-2\ell-2m_3$ fixed letters,  from $e_{m_3+1}$ to $e_{n_3-2\ell-m_3}$, and $(X-(n_3-2\ell-2m_3))/2$ non-fixed letters. Counting across all choices of values for the letters, $p$, $p'$, and $\ell$,  we have that the number of additional Wicks commutators arising from this case is bounded from above by
\begin{align*}
&\sum_{n_1=0}^{X} \sum_{n_2=0}^{X-n_1} \sum_{\ell=1}^{\floor{\frac{X-n_1-n_2}{2}}} \sum_{m_3=0}^{\floor{\frac{n_3-2\ell}{2}}}  (2r-1)^{\frac{X+n_3-2\ell-2m_3}{2}} \ll (2r-1)^X,
\end{align*}
which is dominated by our error term.

\emph{Case 2}.  Suppose that the fixed points are letters of $B$. Then, all of the fixed points must be in one of $w_1$, $w_2$, and $w_3$. First, suppose they are in $w_1$. Then, note that $g(f_{\ell})=a_{n_1}$, but we also have $g(a_{n_1})$ is next to $g(f_{\ell}^{-1})$, which leads to the contradiction that a letter cannot equal its inverse. Second, suppose the fixed letters are in $w_3$. Then, $g(d_{1})=a_1$, but $a_1$ is adjacent to $d_1^{-1}$, a contradiction. 

Thus, the fixed points of $B$ must be in $w_2$. We consider three subcases: $n_2 > m_2$, $n_2< m_2$, and $n_2=m_2$. If $n_2 > m_2$, then  in order for the letters of $B$ to match, we require that the leftmost fixed letter of $B$ is $b_{\frac{n_2-m_2}{2}+1}$.  But then $b_{\frac{n_2-m_2}{2}}$ is both equal to $f_{1}^{-1}$ (since $g(f_1^{-1})=b_{\frac{n_2-m_2}{2}}$) and $e_{n_3-2\ell}$ (since $g(b_{\frac{n_2-m_2}{2}})=e_{n_3-2\ell}$) as letters of $\cG$, which contradicts the fact that $f_1$ and $e_{n_3-2\ell}$ are adjacent. If $n_2 < m_2$, then $g(f_{\ell})=a_{n_1}$, but also $g(a_{n_1})$ is the letter in $D^{-1}A$ that is left of the letter $g(f^{-1}_{\ell})$, giving us the contradiction that the letter in $\cG$ in the position $f_{\ell}$ is adjacent to the letter in the position $f_{\ell}^{-1}$. This implies that $n_2=m_2$, from which we can use an argument similar to that in Case 1 of the previous casework (showing that $W'$ on average can be only decomposed as a commutator in one way) to conclude that $A$ is a power of $D^{-1}$ and $E$, a power of $D$. It follows that our original $(W,p)$ is one of the pairs falling under Case 1 of the previous casework, which are negligible. 

\emph{Case 3}. Suppose that the fixed points are letters of $A$. Then, all of the fixed points must be in one of $w_1$ and $w_2$; they cannot be in $w_3$, since then there must be more than $\ell$ letters right of $A$. Suppose the fixed letters of $A$ are in $w_1$. Then, we must have $g(b_{n_2})=d_1^{-1}$, which contradicts the fact that $b_{n_2}$ is adjacent to $d_{1}$.  Therefore,  the fixed points are necessarily in $w_2$. This requires that $m_1-\ell=n_1+\ell-(m_1-m_2)$ in order for the letters of $A$ to be in matching positions. Thus, we have $m_2=n_1+2\ell-2m_1$.  Note then that $p'$ is parametrized by $m_1 \le (n_1+2\ell)/2$. For each choice of $p'$, we have $n_1$ fixed letters (and $(X+n_1)/2 \le (X+n_1-m_1+\ell)/2$ overall degrees of freedom) if $m_1 \le \ell$, and  $n_1-(m_1-\ell)$ fixed letters (and $(X+n_1-m_1+\ell)/2$ overall degrees of freedom) if $m_1 > \ell$. Thus, counting across all choices of values for the letters, $p$, $p'$, and $\ell$,  we have that the number of additional Wicks commutators arising from this case is bounded from above by
\begin{align*}
 \sum_{n_1=0}^{X} \sum_{n_2=0}^{X-n_1}  \sum_{\ell=1}^{\floor{\frac{X-n_1-n_2}{2}}} \sum_{m_1=0}^{\floor{\frac{n_1+2\ell}{2}}} (2r-1)^{\frac{X+n_1+\ell-m_1}{2}} \ll (2r-1)^X,
\end{align*}
which is dominated by our error term.

Next, consider the case where the fixed points are in two of $A, B,$ and $E$. It is necessary that the fixed letters inside these two subwords must respectively be in two distinct subwords among $w_1,w_2$, and $w_3$. However, we have shown above that the subwords $w_1$ and $w_3$ cannot contain fixed points, a contradiction. Finally, the fixed letters cannot be in all of $A, B,$ and $E$. Indeed, if this were true, then in order for the letters of $A$ and $E$ to match, we require $m_1=n_1+2\ell$ and $m_3=n_3$. But then the letters of $B$ cannot possibly match, a contradiction. 

If $\ell \ge n_3/2$, then we can think of our commutator as a cyclic conjugate of $C^{-1}ABCA^{-1}B^{-1}$ such that the letters are moved from left to right. A symmetric argument like above gives us the same conclusion for this case. We have thus shown that the number of conjugacy classes of commutators with length $2X$ is given by
\[
\frac{1}{6} \cdot \frac{(2r-2)^2(2r-1)^{X-1}}{4r} \left(X^2 + O_r\left(X\right)  \right)= \frac{(2r-2)^2(2r-1)^{X-1}}{24r} \left(X^2 + O_r\left( X\right)  \right),
\]
as needed.

\section{Proof of Theorem~\ref{thm2}}\label{sec:thm2}
\subsection{Counting the Wicks commutators of $G_1 * G_2$}
In addition to his theorem classifying commutators of free groups, Wicks~\cite{wicks} also proved the following analogous theorem characterizing all commutators of a free product of arbitrary groups.
\begin{theorem}[Wicks]\label{wicksfp}
A word in $\Conv_{i\in I} G_i$ is a commutator if and only if it is a conjugate of one of the following fully cyclically reduced products:
\begin{enumerate}[label=(\alph*)]
\item a word comprised of a single letter that is a commutator in its factor $G_i$,
\item $X\al_1 X \al_2^{-1}$, where $X$ is nontrivial and $\al_1, \al_2$ belong to the same factor $G_i$ as conjugate elements,
\item $X \al_1 Y \al_2 X^{-1} \al_3 Y^{-1} \al_4$, where $X$ and $Y$ are both nontrivial, $\al_1, \al_2, \al_3,\al_4$ belong to the same factor $G_i$, and $\al_4\al_3\al_2\al_1$ is trivial,
\item $XYZX^{-1}Y^{-1}Z^{-1}$,
\item $X  Y \al_1  Z  X^{-1} \al_2 Y^{-1}  Z^{-1} \al_3$, where $Y$ and at least one of $X$ and $Z$ is nontrivial,  $\al_1, \al_2, \al_3$ belong to the same factor $G_i$, and $\al_3\al_2\al_1$ is trivial,
\item $X \al_1 Y \be_1  Z \al_2 X^{-1} \be_2 Y^{-1} \al_3 Z^{-1} \be_3$, where $\al_1, \al_2, \al_3$ belong to the same factor $G_i$ and $\be_1, \be_2,\be_3$, to $G_j$, $\al_3\al_2\al_1=\be_3\be_2\be_1=1$, and either $\al_1, \al_2, \al_3, \be_1, \be_2,\be_3$ are not all in the same factor or $X,Y,Z$ are all nontrivial.
\end{enumerate}
\end{theorem}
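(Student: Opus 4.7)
The plan is to prove both directions separately, following Wicks' combinatorial strategy in \cite{wicks}.

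For the ``if'' direction, I would exhibit an explicit preimage under the commutator map $(U,V)\mapsto UVU^{-1}V^{-1}$ for each of the six listed forms. Form (a) is immediate. For form (d), the direct identity $XYZX^{-1}Y^{-1}Z^{-1}=[XY,\,ZX^{-1}]$ suffices. For form (b), if $\al_2 = g\al_1 g^{-1}$ in $G_i$, then one checks that $X\al_1 X\al_2^{-1}$ is conjugate to the commutator of $Xg^{-1}$ with a suitable element of $G_i$; forms (c), (e), and (f) admit analogous explicit factorizations. The appearance of the identities $\al_3\al_2\al_1=1$ and $\be_3\be_2\be_1=1$ in (c), (e), and (f) is precisely the condition that the factor-group letters surviving after amalgamation inside each $G_i$ multiply to the identity, so that the candidate word actually closes up as a genuine commutator.

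For the ``only if'' direction, start with an arbitrary commutator $W=UVU^{-1}V^{-1}$ and choose the pair $(U,V)$ of minimal total free-product length $|U|+|V|$ mapping to the conjugacy class of $W$. Reduce the formal product $UVU^{-1}V^{-1}$ to its fully cyclically reduced normal form by iterating two types of moves: free-product cancellations (removing a pair of inverse adjacent letters) and factor-amalgamations (merging two adjacent letters lying in the same factor $G_i$ into their product, which may become trivial). Record this reduction geometrically as a planar van Kampen--type diagram whose outer boundary is a quadrilateral with sides labelled $U,V,U^{-1},V^{-1}$, whose interior contains cancellation arcs joining these sides in pairs, and whose complementary regions are ``amalgamation disks'' labelled by relations holding in the factors $G_i$. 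The length-minimality of $(U,V)$ forces each cancellation arc to join $U$ to $U^{-1}$ or $V$ to $V^{-1}$ and prevents further collapse of the amalgamation disks, since any further collapse would produce a shorter representative pair.

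The main obstacle, and the heart of the proof, is to classify these reduced diagrams. Since the quadrilateral with its side-identifications is the standard genus-one surface with one boundary component, the reduced diagram is encoded by a finite cellular decomposition of a once-punctured torus whose boundary reads the fully cyclically reduced form of $W$. A finite case analysis according to the number of amalgamation disks (zero, one, or two) and their incidence with the four vertices of the outer quadrilateral produces exactly the six forms (a)--(f): the boundary word of each amalgamation disk yields the relations $\al_3\al_2\al_1=1$ or $\be_3\be_2\be_1=1$, while the nontriviality conditions on $X,Y,Z$ and the ``not all in one factor'' condition on the $\al_i,\be_j$ are forced by the minimality of $|U|+|V|$, as any violation would permit a further reduction and contradict the choice of $(U,V)$. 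The bookkeeping of amalgamations inside the factors $G_i$ is precisely what produces the extra forms (b), (c), (e), (f) not present in the free-group case, where form (d) alone survives.
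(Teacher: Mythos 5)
The paper does not prove this statement: Theorem~\ref{wicksfp} is quoted directly as a theorem of Wicks, with a citation to \cite{wicks}, and no argument is supplied. There is therefore no ``paper's own proof'' against which your sketch can be compared; any assessment of your proposal must be made against Wicks' original paper rather than against anything in this one.

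On its own terms, your outline is in the right spirit, and the broad architecture (explicit preimages for the ``if'' direction; a planar diagram on a once-punctured torus, reduced via cancellations and amalgamations, with minimality of $|U|+|V|$ forcing the reduced shape, for the ``only if'' direction) is plausibly how such a classification is obtained. But the heavy lifting is entirely deferred: the ``finite case analysis'' that is supposed to produce exactly forms (a)--(f) is asserted rather than carried out, and it is precisely there that the content of the theorem lives. In particular, it is not obvious from what you have written why the possible amalgamation-disk configurations bottom out at at most two disks, why the resulting boundary words must take the precise symmetric forms listed, or why the side conditions (``$X,Y,Z$ all nontrivial,'' ``not all $\al_i,\be_j$ in one factor,'' the cyclic relations $\al_3\al_2\al_1=1$, etc.) are both necessary and sufficient to prevent further collapse. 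Also, for the ``if'' direction, the claimed explicit factorizations of forms (b), (c), (e), (f) are not actually exhibited; for instance, your assertion that $X\al_1 X\al_2^{-1}$ (note both occurrences of $X$ have the same sign) ``is conjugate to the commutator of $Xg^{-1}$ with a suitable element of $G_i$'' should be written out and checked, since this form is not a commutator in the na\"ive sense. None of this makes your plan wrong, but as it stands it is an outline of a proof strategy rather than a proof, and its crux -- the diagram classification -- remains to be done.
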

Note that in the above, the Greek letters are assumed to be nontrivial. This convention  is used later in our proof, where when a Greek letter $\al$ is said to satisfy $\al \in G_i$, we mean that $\al$ is a nontrivial element of $G_i$. 

Equipped with a complete classification of commutators in an arbitrary free product, we proceed with our proof of Theorem~\ref{thm2}. Let $G_1 * G_2$ be the free product of two nontrivial finite groups $G_1$ and $G_2$. The letters of a cyclically reduced word of $G$ must alternate between equal numbers of elements of $\gone$ and elements of $\gtwo$, and thus must be of even length. Let $C$ be a fully reduced commutator of $G_1 * G_2$. If $C$ is of the form $(d)$ listed in Theorem~\ref{wicksfp} with none of $X,Y,$ and $Z$ trivial, then the last letter of $X$ is in different factors compared to the first letter of $Y$ and the last letter of $Z$, which must also be in different factors, a contradiction. If $C$ is of the form $(e)$, then $\al_1, \al_2,$ and $\al_3$ must be in the same free-product factor, but this would imply that the last letter of $Y$ is in different free factors compared to the first letter of $Z$  and the first letter of $X$; this contradicts the similar implication that the first letter of $Z$  and the first letter of $X$ are in different factors. Thus, $C$ must be of the form $(a)$, $(b)$, $(c)$, $(f)$, or $XYX^{-1}Y^{-1}$ (i.e., of the form $(d)$ with $|Z|=0$). However, if $C$ is of the form $(f)$, then $G_i$ and $G_j$ must be the same free-product factor, since otherwise $\al_1$ would be adjacent to letters of distinct free-product factors, a contradiction.

By Wicks' theorem for free products, we need to count cyclic conjugacy classes of Wicks commutators of $G_1 * G_2$. It follows from the discussion above that every commutator of $W$ is conjugate to one of the fully cyclically reduced forms listed under Definition~\ref{wicksdef2}. These general forms, labeled from $(1)$ to $(9)$, have without loss of generality been taken to have the letters of odd position be in $G_1$ and those of even position in $G_2$. Throughout this proof, we will regularly use the terminology \emph{Wicks commutators of the form $(i)$}, where $1 \le i \le 9$, to refer to the corresponding general form listed under Definition~\ref{wicksdef2}.

Consider Wicks commutators of $G_1 * G_2$ with length $k$, where $k$ is a multiple of $4$. Let $X=k/4$, so that the left-half subword of $W$ contains $X$ letters of the $G_1$ factor and $X$ letters of the $G_2$ factor, which are placed in  alternating order.  The commutators of the forms $(1)$ and $(2)$ are $O(1)$ in number and all have length $1$. The commutators of the forms $(3)$ and $(4)$ are $O\left(\goc^{X}  \gtc^{X}\right)$ in number, since there are $\le X$ degrees of freedom among the letters of each free-product factor. The commutators of the forms $(5), (6),$ and $(7)$ are $O\left(X  \goc^{X}  \gtc^{X}\right)$ in number, since there are $\le X$ degrees of freedom for the $X$ letters of each free-product factor and $O(X)$ possible pairs of values for $|A|$ and $|B|$, by an argument similar to that used in Section~\ref{sec:thm1}.

The number of Wicks commutators of the form $(8)$ is
\[
  \frac{(X-5)(X-4)}{2}  \left( \left|G_2\right|-2 \right)^2  \goc^X \gtc^{X-1}.
\]
Indeed, we have $\gtc \left( \left|G_2\right|-2 \right)$ distinct choices of the triple $(\al_1, \al_2, \al_3)$, since there are $\left|G_2\right|-1$  choices for $\al_1$, $\left|G_2\right|-2$ choices for $\al_2 \neq \al_1^{-1}$, and $\al_3$ is uniquely determined from the previous choices.  Likewise, there are $\gtc \left( \left|G_2\right|-2 \right)$ distinct choices of the triple $(\be_1, \be_2, \be_3)$. Finally, there are $(X-5)(X-4)/2$ partitions of $X-3$ into three nontrivial parts $(n_1,n_2,n_3)$ such that  $|A| = 2n_1 +1$, $|B|= 2n_2 +1$, and $|C|= 2n_3 +1$; $X$ degrees of freedom for choosing the $G_1$-letters of $A, B$, and $C$; and $X-3$ degrees of freedom for choosing the $G_2$-letters of $A, B$, and $C$. By an analogous argument, the number of Wicks commutators of the form $(9)$ is 
\[
  \frac{(X-5)(X-4)}{2}\left( \left|G_1\right|-2 \right)^2   \goc^{X-1} \gtc^X.
\]
We define a \emph{generic Wicks commutator of $G_1 * G_2$} to be one of the form $(8)$ or $(9)$. These comprise the main term of the total number of Wicks commutators of length $k=4X$, since we have seen above that the non-generic Wicks commutators, those of the forms $(1)-(7)$, comprise a negligible subset. Overall, we have shown the following.
\begin{lemma}
The total number of Wicks commutators with length $4X$ is given by
\begin{align*}
\frac{1}{2}\left(\goc\left( \left|G_2\right|-2 \right)^2 +\left( \left|G_1\right|-2 \right)^2 \gtc \right) & X^2  \goc^{X-1} \gtc^{X-1} \\ +&O\left( X\goc^{X} \gtc^{X}  \right).
\end{align*}
\end{lemma}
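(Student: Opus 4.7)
The plan is to combine the counts established in the preceding paragraphs for each of the nine forms of Wicks commutators listed in Definition~\ref{wicksdef2}; no new combinatorial argument is required here, only bookkeeping of main and error terms.

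For the non-generic forms, I would aggregate as follows. Forms $(1)$ and $(2)$ produce $O(1)$ words of length $1$, hence none of length $4X$ once $X\ge 1$. Forms $(3)$ and $(4)$ fix the length of the single free subword, so each contributes at most $O(\goc^X\gtc^X)$ through the letter-value choices alone. Each of the forms $(5)$, $(6)$, $(7)$ has $O(X)$ possible partitions of the remaining length among its two free subwords $A, B$ and at most $\goc^X\gtc^X$ worth of letter values per partition (by the same kind of alternation argument used in Section~\ref{sec:thm1}), giving $O(X\goc^X\gtc^X)$. The aggregate non-generic contribution is thus absorbed into the error term $O(X\goc^X\gtc^X)$ claimed by the lemma.

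For the generic forms, I would sum the exact counts for $(8)$ and $(9)$ computed just above. Factoring out the common exponential $\goc^{X-1}\gtc^{X-1}$ yields
\[
\frac{(X-5)(X-4)}{2}\,\goc^{X-1}\gtc^{X-1}\Bigl[\goc(|G_2|-2)^2+(|G_1|-2)^2\gtc\Bigr].
\]
Expanding $(X-5)(X-4)/2 = X^2/2 + O(X)$ separates the claimed main term from an additional $O\bigl(X\goc^{X-1}\gtc^{X-1}\bigr)$ remainder, which is dominated by the non-generic error already in place. There is no substantive obstacle; the only mild point of care is to confirm that each non-generic form carries only finitely many Greek-letter configurations whose count depends solely on $|G_1|$ and $|G_2|$, so that all implied constants are uniform in $X$.
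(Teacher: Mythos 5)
Your proposal matches the paper's argument exactly: the lemma is a bookkeeping step that aggregates the form-by-form counts established in the preceding paragraphs, absorbing forms $(1)$--$(7)$ into the $O(X\goc^X\gtc^X)$ error term, summing the exact counts for the generic forms $(8)$ and $(9)$, factoring out $\goc^{X-1}\gtc^{X-1}$, and expanding $(X-5)(X-4)/2 = X^2/2 + O(X)$ to separate the main term from a remainder dominated by the existing error. No discrepancy.
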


\subsection{A conjugacy class of commutators contains six Wicks commutators on average} We need to count the number of conjugacy classes containing at least one generic Wicks commutator. As before, let $\cC$ be the conjugacy class of the Wicks commutator $W\defeq A\al_1 B\be_1 C\al_2 A^{-1}\be_2 B^{-1}\al_3 C^{-1}\be_3$, with $(n_1,n_2,n_3)$ a partition of $X-3$ and $|A|=2n_1+1$, $|B|=2n_2+1$, and $|C|=2n_3+1$. We wish to show that on average, $\cC$ does not contain generic Wicks commutators other than the six obvious ones:  
\begin{align*}
W, B\be_1 C\al_2 A^{-1}\be_2 B^{-1}\al_3 C^{-1} &\be_3 A \al_1, C\al_2 A^{-1}\be_2 B^{-1}\al_3 C^{-1}\be_3 A \al_1 B \be_1, 
\\
&A^{-1}\be_2 B^{-1}\al_3 C^{-1} \be_3 A \al_1 B \be_1 C\al_2, B^{-1}\al_3 C^{-1}\be_3 A\al_1B\be_1 C\al_2 A^{-1} \be_2,
\\&\text{and } C^{-1}\be_3 A\al_1 B \be_1 C \al_2A^{-1} \be_2 B^{-1}\al_3.    
\end{align*}
We wish to show that any other cyclic conjugate $W'$ of $W$ is not on average a Wicks commutator. 

Suppose the contrary. One of the ways this can happen is if there is such a $W'$ that is of the form $(8)$, i.e, there exists a partition $p'=(m_1,m_2,m_3)$ of $X-3$ into three parts such that 
\[
W'=w_{1}\al'_1 w_{2} \be'_1 w_{3} \al'_2 w_{1}^{-1} \be'_2 w_{2}^{-1} \al'_3 w_{3}^{-1} \be'_3
\]
 for subwords $w_{1}, w_{2},$ and  $w_{3}$ of lengths $2m_1+1, 2m_2+1, $ and $2m_3+1$ given by $p'$, and 
 \[
 \al'_1,\al'_2,\al'_3, \be'_1,\be'_2, \be'_3 \in G_2.
 \]
 We will see that the argument showing that these exceptions are negligible also shows that the exceptions such that $W'$ is of the other Wicks-commutator forms are also negligible.

Suppose the number of letters of the conjugation is $2\ell$, where $\ell \le (n_3+1)/2$ is arbitrary. For the desired uniformity of our presented argument, we assume that $\ell >0$, although the exceptions in the $\ell=0$ case can be bounded similarly. We decompose $C=DEF$  without cancellation so that $|D|=|F|= 2\ell-1$. Label the letters of $W$ by $A=a_1 \bar{a}_1a_2 \bar{a}_2 \cdots  \bar{a}_{n_1} a_{n_1+1}$, $B=b_1 \bar{b}_1b_2 \bar{b}_2\cdots \bar{b}_{n_2} b_{n_2+1}$, $D=d_1\bar{d}_1 \cdots \bar{d}_{\ell-1} d_{\ell}$, $E=\bar{e}_1 e_2 \bar{e}_2  \cdots   e_{n_3-2\ell +2} \bar{e}_{n_3-2\ell+2}$, and $F =f_1 \bar{f}_1 \cdots  \bar{f}_{\ell-1} f_{\ell}$; note that the barred letters denote the $G_2$-letters and the non-barred letters, the $G_1$-letters. Consider the cyclic conjugate 
\[
W'\defeq D^{-1}\be_3 A\al_1 B\be_1 D E F\al_2 A^{-1}\be_2 B^{-1} \al_3 F^{-1}E^{-1}
\]
of $W$. We wish to show that on average, $W'$ is not a Wicks commutator.

The subwords $w_{1}, w_{2},$ and $w_{3}$ are  comprised of the letters 
\begin{equation}\label{letters2}
d_{\ell}^{-1}, \ldots, d_1^{-1}, \beta_3, a_1, \ldots, a_{n_1+1}, \alpha_1, b_1, \ldots, b_{n_2+1}, \be_1, d_1, \ldots, d_\ell, \bar{e}_1, \ldots, \bar{e}_{n_3-2\ell+2},
\end{equation}
except three of these letters instead correspond to $\al'_1,\al'_2,$ and $\al'_3$ and thus omitted. Then, note that the second half of $W'$ can be considered in two forms:
\[
F \al_2 A^{-1} \be_3 B^{-1} \al_3 F^{-1}E^{-1}= w_{1}^{-1} \be'_2 w_{2}^{-1} \al'_3 w_{3}^{-1} \be'_3. 
\]
Equivalently, this equality can be written as 
\begin{equation}\label{equality}
EF\al_3^{-1} B \be_3^{-1} A \al_2^{-1} F^{-1} = \be^{\prime -1}_3w_3\al^{\prime -1}_3 w_2 \be^{\prime -1}_2 w_1,
\end{equation}
where we reiterate that we substitute the appropriate letters of (\ref{letters2}), in the correct order, for $w_3, w_2,$ and $w_1$.

Consider the function $g$ mapping the ordered set of symbols of the left-hand side of (\ref{equality}),
\[
\cA\defeq \{\bar{e}_1,\ldots,\bar{e}_{n_3-2\ell+2}, f_1,\ldots, f_{\ell},\al_3^{-1}, b_1,\ldots,b_{n_2+1}, \be_3^{-1},a_1,\ldots,a_{n_1+1},\al_2^{-1} ,f_{\ell}^{-1},\ldots,f_1^{-1}\},
\]
to the ordered  set $\cB$ of symbols of the right-hand side of (\ref{equality}), which when reordered are comprised of the letters of (\ref{letters2}) except we replace the $(2m_1+2)$th, $(2m_1+2m_2+4)$th, and $(2m_1+2m_2+2m_3+6)$th leftmost letters in (\ref{letters2}) with $\be^{\prime -1}_2,\al^{\prime -1}_3,$ and $\be^{\prime -1}_3$; note that the  $(2m_1+2m_2+2m_3+6)$th letter is always $\bar{e}_{n_3-2\ell+2}$. Specifically, $g$  maps the $i$th leftmost letter of the left-hand side of (\ref{equality}) to the $i$th leftmost letter of the right-hand side. 

First, suppose $g$ has no fixed points ($\mathfrak i$ such that $g(\mathfrak i)=\ii$). Then, use an algorithm similar to the one used in Section~\ref{sec:thm1} to conclude that there are $\le X/2$ degrees of freedom for the $G_1$-letters of $ABDEF$, and likewise for the $G_2$-letters. So, $W$ must be one of only $O\left(\goc^{X/2} \gtc^{X/2} \right)$ choices (for each choice of $\ell$ and $p'$).

Now, suppose that there exists an $\mathfrak i$ such that $g(\mathfrak i)=\ii$. Such fixed points $\ii$ must be letters of $A$, $B$, or $E$. We first consider the case that all the fixed points are letters of only one of $A$, $B$, and $E$. In this case, we consider the following subcases for $W$, $p$, and $p'$: 

\emph{Case 1}. Suppose that the fixed points are letters of $E$. Then, all of the fixed points must be in one of $w_2$ and $w_3$; they cannot be in $w_1$ since this would mean that $w_1$ contains $e_1$, but $e_1$ is necessarily located at different positions in the left-hand side and right-hand side of (\ref{equality}). If all the fixed points are in $w_2$, then we require that 
\[
1+(2m_3+1)+1=(2n_3-4\ell+3)-\left((2m_2+1)+1+(2m_3+1)+1\right),
\]
 in order for the first letter of $w_2$ to be at the same position in both the left-hand and right-hand side. Hence, we have $m_2+2m_3=n_3-2\ell-2$, which means there are $\le (n_3-2\ell-2)/2$ choices for $p'$ parametrized by $m_3 \le(n_3-2\ell-2)/2$ . For each such choice of $p'$, there are 
 \[
 2n_3-4\ell+3 -2\left((2m_3+1)+1+1\right)=2n_3-4\ell-4m_3-3
 \]
fixed letters excluding the first $2m_3+3$ and the last $2m_3+3$ letters of $E$, with $n_3-2\ell-2m_3-1$ of them in $G_1$ and $n_3-2\ell-2m_3-2$ in $G_2$. There are $\le \left(X-(n_3-2\ell-2m_3-1)\right)/2$ degrees of freedom for the non-fixed letters in $G_1$ and $\le \left(X-(n_3-2\ell-2m_3-2)\right)/2$  degrees of freedom for the fixed letters in $G_2$. Thus, counting across all choices of values for the letters, $p$, $p'$, and $\ell$,  we have that the number of additional Wicks commutators arising from this case is bounded from above by
\begin{align*}
\sum_{n_1=0}^{X-3} \sum_{n_2=0}^{X-3-n_1} \sum_{\ell=1}^{\floor{\frac{X-2-n_1-n_2}{2}}} \sum_{m_3=0}^{\floor{\frac{n_3-2\ell-2 }{2}}}  \goc^{\frac{X+n_3-2\ell-2m_3-1}{2}}&\gtc^{\frac{X+n_3-2\ell-2m_3-2}{2}} 
\\
&\ll \goc^X\gtc^X,
\end{align*}
which is dominated by our error term.

 Next, suppose the fixed letters are in $w_3$.  Then, it is necessary that $g(\bar{e}_1)=\be^{\prime -1}_3$ and $w_3$ is given by $E$ with the first and last letters omitted. Thus, we have that $m_3=n_3-2\ell$, so the number of possible choices for $p'$ is at most the number of partitions of $X-3-n_3+2\ell$ into two nontrivial parts, which is $X-3-n_3+2\ell$. For each choice of $p'$, there are $n_3-2\ell+1$ fixed letters in $G_1$ and $n_3-2\ell$ fixed letters in $G_2$. Additionally, there are $\le \left(X-(n_3-2\ell+1)\right)/2$  degrees of freedom for the non-fixed letters in $G_1$ and $\le \left(X-(n_3-2\ell)\right)/2$  degrees of freedom for the  non-fixed letters in $G_2$. Counting across all choices of values for the letters, $p$, $p'$, and $\ell$,  we have that the number of additional Wicks commutators arising from this case is bounded from above by
\begin{align*}
\sum_{n_1=0}^{X-3} \sum_{n_2=0}^{X-3-n_1} \sum_{\ell=1}^{\floor{\frac{X-2-n_1-n_2}{2}}} (X-3-n_3+2\ell)  \goc^{\frac{X-n_3+2\ell-1}{2}} &\gtc^{\frac{X-n_3+2\ell}{2}} \\
 &\ll \goc^X\gtc^X,
\end{align*}
which is dominated by our error term.

\emph{Case 2}.  Suppose that the fixed points are letters of $B$. Then, all of the fixed points must be in one of $w_1$, $w_2$, and $w_3$. First, suppose they are in $w_1$. This requires that $w_1=D^{-1}\be_3 A\al_1 B \be_1 V$, where $V$ is the  left subword of $D E$ having length $2n_1+2\ell+1$ (the length of $A\al_2^{-1}F^{-1}$). All letters of $B$ are thus included in $w_1$.   We have 
\begin{align*}
2m_1+1=(2\ell-1)+1+(2n_1+1)+1+(2n_2+1)&+1+(2n_1+2\ell+1)\\&=4n_1+2n_2+4\ell+5,
\end{align*}
so $m_1 = 2n_1+n_2+2\ell+2$. Thus, the number of possible choices for $p'$ is at most the number of partitions of $X-3-2n_1-n_2-2\ell-2$ into two nontrivial parts, which is $X-5-2n_1-n_2-2\ell \le X-3-n_1-n_2-2\ell$ (the latter is guaranteed to be nonnegative for any choice of $p$). The fixed letters are precisely the letters of $B$, so for each choice of $p'$, we have $n_2+1$ fixed $G_1$-letters, $n_2$ fixed $G_2$-letters, $\le (X-n_2-1)/2$  degrees of freedom for the non-fixed $G_1$-letters, and $\le (X-n_2)/2$  degrees of freedom for the non-fixed $G_2$-letters. Counting across all choices of values for the letters, $p$, $p'$, and $\ell$,  we have that the number of additional Wicks commutators arising from this case is bounded from above by
\begin{align*}
\sum_{n_1=0}^{X-3} \sum_{n_2=0}^{X-3-n_1}  \sum_{\ell=1}^{\floor{\frac{X-2-n_1-n_2}{2}}} (X-3-n_1-n_2-2\ell) \goc^{\frac{X+n_2+1}{2}}& \gtc^{\frac{X+n_2}{2}} \\&\ll \goc^X\gtc^X,
\end{align*}
which is dominated by our error term.

Next, suppose that the fixed points are in $w_2$. Then, we require that the difference between the lengths of  $EF\al_3^{-1}$ (length $2n_3-2\ell+3$) and $\be_3^{-1}A\al_2^{-1}F^{-1}$ (length $2n_1+2\ell+2$) is the same as that between $\be^{\prime-1}_3w_3\al_3^{\prime-1}$ (length $2m_3+3$) and $\be^{\prime-1}_2w_1$ (length $2m_1+2$). Furthermore, the number of (fixed) letters of $B$ in $w_2$ is $2n_2+1$ if 
\[
j \defeq \frac{2n_1+2\ell+2-(2m_1+2)}{2} = \frac{2n_3-2\ell+3-(2m_3+3)}{2}
\] 
is negative and $2(n_2-j)+1$ if $j \ge 0$. First, suppose that $j \ge 0$. In this  case, $p'$ is determined by the choice of $j \le n_2/2$, for which there are $n_2-2j+1$ fixed $G_1$-letters and $n_2-2j$ fixed $G_2$-letters of $B$. There are $\le \left(X-(n_2-2j+1)\right)/2$  degrees of freedom for the non-fixed letters of $G_1$ and $\le (X-n_2+2j)/2$ degrees of freedom for the non-fixed letters of $G_2$, so overall, we can count across all choices of values for the letters, $p$, $j$, and $\ell$ to get that the number of additional Wicks commutators arising from this case is bounded from above by
\begin{align*}
\sum_{n_1=0}^{X-3} \sum_{n_3=0}^{X-3-n_1}  \sum_{\ell=1}^{\floor{\frac{n_3}{2}}} \sum_{j=0}^{\floor{\frac{X-3-n_1-n_3}{2}}} \goc^{\frac{X+n_2-2j+1}{2}} &\gtc^{\frac{X+n_2-2j}{2}} \\
&\ll\goc^X\gtc^X,
\end{align*}
which is dominated by our error term.

Now, suppose that $j <0$. In this case, $-j=m_1-n_1-\ell=m_3-n_3+\ell$ is a positive integer satisfying $(n_1+\ell-j) +(n_3-\ell-j) = m_1+m_3 \le X-3$, which gives the condition $-j \le (X-3-n_1-n_3)/2 = n_2/2$. Note that $p'$ is determined by the choice of $-j$, for which there are $n_2+1$ fixed $G_1$-letters and $n_2$ fixed $G_2$-letters, all of which are in $B$. The non-fixed letters in $G_1$ have $\le (X-n_2-1)/2$ degrees of freedom and those in $G_2$ have $\le (X-n_2)/2$ degrees of freedom. Overall, we can count across all choices of values for the letters, $p$, $-j$, and $\ell$ to get   that the number of additional Wicks commutators arising from this case is bounded from above by
\begin{align*}
\sum_{n_1=0}^{X-3} \sum_{n_2=0}^{X-3-n_1}  \sum_{\ell=1}^{\floor{\frac{X-2-n_1-n_2}{2}}} \sum_{-j=1}^{\floor{\frac{n_2}{2}}}    \goc^{\frac{X+n_2-2j+1}{2}} &\gtc^{\frac{X+n_2-2j}{2}} \\\ll &\goc^X\gtc^X,
\end{align*}
which is dominated by our error term.

Finally, suppose that the fixed points are in $w_3$.  This requires that $w_3$ is given by $V B \be_1 DE$ with the last letter omitted, where $V$ is the right subword of $D^{-1}\be_3 A\al_1$ having length $(2n_3-4\ell+3)+(2\ell-1)=2n_3-2\ell+2$, one less than that of $EF\al_3^{-1}$. All letters of $B$ are thus included in $w_3$. We have that
 \begin{align*}
2m_3+1=(2n_3-2\ell+2)+(2n_2+1)+1+(2\ell-1)&+(2n_3-4\ell+2)\\&=2n_2+4n_3 - 4\ell+5,
\end{align*}
so $m_3=n_2+2n_3-2\ell+2$. Thus, the number of possible choices for $p'$ is at most the number of partitions of $X-5-n_2-2n_3+2\ell$ into two nontrivial parts, which is $X-5-n_2-2n_3+2\ell \le X-3-n_2+2\ell$ (the latter is guaranteed to be nonnegative for any choice of $p$). The fixed letters are precisely the letters of $B$, so there are $n_3+1$ fixed letters of $G_1$ and $n_3$ fixed letters of $G_2$, with the non-fixed $G_1$-letters having $\le (X-n_2-1)/2$ degrees of freedom and the non-fixed $G_2$-letters having $\le (X-n_2)/2$ degrees of freedom. Counting across all choices of values for the letters, $p$, $p'$, and $\ell$,  we have that the number of additional Wicks commutators arising from this case is bounded from above by
 For each choice of $p'$,
\begin{align*}
 \sum_{n_1=0}^{X-3} \sum_{n_2=0}^{X-3-n_1}  \sum_{\ell=1}^{\floor{\frac{X-2-n_1-n_2}{2}}} (X-3-n_2+2\ell)  \goc^{\frac{X+n_2+1}{2}} &\gtc^{\frac{X+n_2}{2}} \\ \ll &\goc^X\gtc^X,
\end{align*}
which is dominated by our error term.

\emph{Case 3}. Suppose that the fixed points are letters of $A$. Then, all of the fixed points must be in one of $w_1$ and $w_2$; they cannot be in $w_3$, since then there would be more than $2\ell$ letters right of $A$. First, suppose they are in $w_1$.  This requires that $w_1=D^{-1}\be_3^{-1}AV$, where $V$ is the left subword of $\al_1^{-1} B \be_1^{-1} DE$ having length $2\ell$, the length of $\al_2^{-1}F$. The fixed letters are then precisely the letters of $A$, so there are $n_1+1$ fixed letters of $G_1$ and $n_1$ fixed letters of $G_2$. The non-fixed $G_1$-letters have $\le (X-n_1-1)/2$ degrees of freedom and the non-fixed $G_2$-letters have $\le (X-n_1)/2$  degrees of freedom.The number of possible choices for $p'$ is at most the number of partitions of $X-3-m_1=X-3-(2\ell+2n_1+1+2\ell)=X-4-2\ell -n_1$ into two nontrivial parts, which is $\le X-3-2\ell-n_1$ (the latter is guaranteed to be nonnegative for any choice of $p$).   Counting across all choices of values for the letters, $p$, $p'$, and $\ell$,  we have that the number of additional Wicks commutators arising from this case is bounded from above by
\begin{align*}
 \sum_{n_1=0}^{X-3} \sum_{n_2=0}^{X-3-n_1}  \sum_{\ell=1}^{\floor{\frac{X-2-n_1-n_2}{2}}} (X-3-2\ell-n_1)  \goc^{\frac{X+n_1+1}{2}} &\gtc^{\frac{X+n_1}{2}} \\ \ll &\goc^X\gtc^X,
\end{align*}
which is dominated by our error term.

Finally, suppose the fixed points are in $w_2$. Since the length difference  $2m_1-2\ell+2$ between the lengths of $D^{-1}$ and $w_1$ must be equal to the length difference $2n_1+2\ell-2m_1-2m_2-2$ between the lengths of $D^{-1} \be_3 A$ and $w_1 \al'_1 w_2$, we require that $m_2=n_1+2\ell-2m_1-2$. Note then that $p'$ is parametrized by $m_1 \le (n_1+2\ell-2)/2$. For a given choice of $p'$, if $m_1 \le \ell$, then we have $n_1+1$ fixed letters of $G_1$ and $n_1$ fixed letters of $G_2$, with $\le (X-n_1-1)/2$ degrees of freedom for the non-fixed $G_1$-letters and $\le (X-n_1)/2$ degrees of freedom for the non-fixed $G_2$-letters. This gives  $\le (X+n_1+1)/2 \le (X+n_1-m_1+\ell+1)/2$ overall degrees of freedom for the $G_1$-letters, as well as  $\le (X+n_1-m_1+\ell)/2$ ones for the $G_2$-letters. If $m_1 > \ell$, then we have $2n_1-2(m_1-\ell)+1$ fixed letters of $G_1$ and $2n_1-2(m_1-\ell)$ fixed letters of $G_2$, so analogously we have $\le (X+n_1-m_1+\ell+1)/2$ overall degrees of freedom for the $G_1$-letters, as well as $\le (X+n_1-m_1+\ell)/2$ ones for for the $G_2$-letters. Thus, counting across all choices of values for the letters, $p$, $p'$, and $\ell$,  we have that the number of additional Wicks commutators arising from this case is bounded from above by
\begin{align*}
 \sum_{n_1=0}^{X-3} \sum_{n_2=0}^{X-3-n_1}  \sum_{\ell=1}^{\floor{\frac{X-2-n_1-n_2}{2}}} \sum_{m_1=0}^{\floor{\frac{n_1+2\ell-2}{2}}} \goc^{\frac{X+n_1-m_1+\ell+1}{2}} &\gtc^{\frac{X+n_1-m_1+\ell}{2}} \\ \ll &\goc^X\gtc^X,
\end{align*}
which is dominated by our error term.

Next, we suppose that the fixed letters of $g$ are in two of the three subwords $A$, $B$, and $E$. Consider the following subcases:

\emph{Case 1}. Suppose the fixed letters of $g$ are in $A$ and $B$. It is necessary that the fixed letters of $A$ and those of $B$ are in $w_i$ and $w_j$, respectively, such that $i < j$; otherwise,  the fixed letters of $A$ would come before the fixed letters of $B$, a contradiction. First, suppose that the fixed letters of $B$ are in $w_2$, which implies that the fixed letters of $A$ are in $w_1$. Then, we require that $w_1=D^{-1}\be_3AV$, where $V$ is the left subword of $\al_1B\be_1DE$ having length $2\ell$. This gives $2m_1+1 =2n_1+ 4\ell +1$, i.e, $m_1 = n_1+2\ell$. Furthermore, since we have fixed letters of $B$, we require that $V$ does not include all of $B$, i.e., $2n_2+2>2\ell$, or equivalently, $n_2 \ge \ell$.  Next, for the fixed letters of $B$ to match in position, we require that $w_2$ ends at the letter $b_{2n_2-2\ell+1}$, which gives us  $2m_2+1=(2n_2-2\ell+1)-(2\ell+1)+1=2n_2-4\ell+1$, i.e., $m_2 = n_2-2\ell \ge 0$. Then, $m_3$ is automatically determined, and in particular, $w_3$ is the subword of $b_{n_2-\ell+2} \cdots b_{n_2+1} \be_1 DE$ omitting the rightmost letter. For this  $p'$ corresponding to $p$, we have $2n_1+1$ fixed letters of $A$ ($n_1+1$ letters of $G_1$ and $n_1$ letters of $G_2$) and $2n_2-4\ell+1$ fixed letters of $B$ ($n_2-2\ell+1$ letters of $G_1$ and $n_2-2\ell$ letters of $G_2$). Next, we bound the degrees of freedom of the non-fixed letters. Note that $g$ maps the letters of $EF\al_3^{-1}b_1 \cdots b_{\ell-1}\bar{b}_{\ell-1}$ to those of $\be_3^{\prime -1} b_{n_2-\ell+2} \cdots b_{n_2+1} \be_1 DE$ in order, but  $g$ also maps $f_{\ell}^{-1}, \ldots, \bar{f}_2^{-1} f_1^{-1}$ to $b_1 ,\ldots, \bar{b}_{\ell-1}, b_{\ell}$ and $ b_{n_2-\ell+2}, \ldots, b_{n_2+1}$, to $d_{\ell}^{-1},\ldots, d_1^{-1}$. Thus, arguing inductively by translation, we see that that choosing the letters of $F$ determines the letters of $E$, and thus also determines those of $D$, thereby determining all non-fixed letters (while not caring about the constant number of $\al_i$ and $\be_i$ letters). There are $\ell$ $G_1$-letters and $\ell-1$ $G_2$-letters in $F$. Counting across all choices of values for the letters, $p$, and $\ell$, we have that the number of additional Wicks commutators arising from this case is bounded from above by
\begin{align*}
\sum_{n_1=0}^{X-3} \sum_{n_2=0}^{X-3-n_1} \sum_{\ell=1}^{\floor{\frac{X-2-n_1-n_2}{2}}} \goc^{(n_1+1)+(n_2-2\ell+1)+\ell} &\gtc^{n_1+(n_2-2\ell)+(\ell-1)} \\ \ll &X\goc^X\gtc^X,
\end{align*}
which is dominated by our error term.

Next, suppose that the fixed letters of $B$ are in $w_3$. Then, we require that $w_3=VB\be_1 D E$, where $V$ is the right subword of $D^{-1}\be_3 A \al_1$ having length $2n_3-2\ell+2$, the length of $EF$.  Furthermore, since we have fixed letters of $A$, we require that $V$ does not include all of $A$, i.e., $2n_3-2\ell+2 < 2n_1+2$, or in other words, $n_3 < n_1+\ell$.  Next, for the fixed letters of $A$ to match in position, we require that they are in $w_2$, and specifically that $w_2=a_{n_3-\ell+1}\cdots a_{n_1-n_3+\ell}$. This gives us $2m_2+1=2n_1-4n_3+4\ell-1$, and thus $m_2 =n_1-2n_3+2\ell-1 > 0$. It follows that $w_1=D^{-1}\be_3 a_{1} \cdots a_{n_3-\ell}$. In particular, $m_1$ is automatically determined, and  for this  $p'$ corresponding to $p$, we have $n_2+1$ fixed $G_1$-letters and $n_2$ fixed $G_2$-letters of $B$, as well as $n_1-2n_3+2\ell$ fixed $G_1$-letters and $n_1-2n_3+2\ell-1$ fixed $G_2$-letters of $A$. Next, we bound the degrees of freedom of the non-fixed letters. Note that $g$ maps the letters of $a_{n_1-n_3+\ell+1}\cdots a_{n_1} \al_2^{-1} F^{-1}$ to those of $D^{-1}\be_3 a_{1} \cdots a_{n_3-\ell}$ in order. However, $g$ maps $d_1, \ldots, d_{\ell}, \bar{e}_1, \ldots, e_{n_3-2\ell+1}$ to $a_1, \bar{a}_1, \ldots, a_{n_3-\ell+1}, \bar{a}_{n_3-\ell+1}$. Also, $g$ maps $a_{n_1-n_3+\ell+1}, \cdots,  a_{n_1+1}$, to $e_2, \ldots, \bar{e}_{n_3-2\ell+2}, f_1, \ldots, f_{\ell}$. Thus, arguing inductively by translation, we see that that choosing the letters of $F$ determines the letters of $E$, and thus also determines those of $D$, thereby determining all non-fixed letters (while not caring about the constant number of $\al_i$ and $\be_i$ letters). Counting across all choices of values for the letters, $p$, and $\ell$, we have that the number of additional Wicks commutators arising from this case is bounded from above by
\begin{align*}
\sum_{n_1=0}^{X-3} \sum_{n_2=0}^{X-3-n_1} \sum_{\ell=1}^{\floor{\frac{X-2-n_1-n_2}{2}}} \goc^{(n_1-2n_3+2\ell)+(n_2+1)+\ell} &\gtc^{(n_1-2n_3+2\ell-1)+n_2+(\ell-1)}\\ \ll &X\goc^X\gtc^X,
\end{align*}
which is dominated by our error term.

\emph{Case 2}. Suppose the fixed letters of $g$ are in $B$ and $E$. Similarly to before, it is necessary that the fixed letters of $B$ and those of $E$ are in $w_i$ and $w_j$, respectively, such that $i < j$.  First, suppose that the fixed letters of $B$ are in $w_1$. Then, we require that $w_1=D^{-1}\be_3A\al_1BV$, where $V$ is the left subword of $\be_1DE$ having length $2n_1+2\ell+2$, the length of $\be_3^{-1}A\al_2^{-1}F^{-1}$.  Then, $w_2$ must start with $e_{n_1+3}$, which means in order to have the letters of $E$ match, we must have $2m_3+3=2n_1+3$, i.e., $m_3=n_1$. Since $(2n_1+2)+1+(2m_2+1)+1+(2m_3+1)+1 =2n_3-4\ell+3$ (counting the letters of $E$ in two ways), we have $m_2=n_3-2n_1-2\ell-2>0$. There are $n_3-2n_1-2\ell-1$ fixed $G_1$-letters and $n_3-2n_1-2\ell-2$ fixed $G_2$-letters of $E$ in $w_2$. Also, there are $n_2+1$ fixed $G_1$-letters  and $n_2$ fixed $G_2$-letters of $B$. Next, we bound the degrees of freedom of the non-fixed letters. Note that $g$ maps the letters of $A\al_2^{-1}F^{-1}$ to those of  $D \bar{e}_1 \cdots \bar{e}_{n_1+1}e_{n_1+2}$ in order. 
Likewise, by observing the left end of $w_1$, we see that $g$ maps the letters of $e_{n_3-n_1-2\ell+2} \cdots \bar{e}_{n_3-2\ell+2} F$ to those of $D^{-1}\be_3 A$ in order. However, we also have that $g$ maps the letters of $\bar{e}_1 \cdots \bar{e}_{n_1+1}e_{n_1+2}$ to those of $\be_3^{\prime -1} e_{n_3-n_1-2\ell+2} \cdots \bar{e}_{n_3-2\ell+1} e_{n_3-2\ell+2}$ in order, which overall gives us the ordered equality (by $g$) of letters
\begin{align*}
A\al_2^{-1}F^{-1}\bar{e}_{n_3-2\ell+2}F& =D \bar{e}_1 \cdots e_{n_1+2} \bar{e}_{n_3-2\ell+2}F
\\ & = D\be_3^{\prime -1} e_{n_3-n_1-2\ell+2} \cdots  e_{n_3-2\ell+2} \bar{e}_{n_3-2\ell+2} F=D\be_{3}^{\prime -1} D^{-1}\be_3A.
\end{align*}
Thus, arguing inductively by translation, we see that that choosing the letters of $F$ determines the letters of $A$, and thus also determines those of $D$, thereby determining all non-fixed letters (while not caring about the constant number of exceptional letters). Counting across all choices of values for the letters, $p$, and $\ell$, we have that the number of additional Wicks commutators arising from this case is bounded from above by
\begin{align*}
\sum_{n_1=0}^{X-3} \sum_{n_2=0}^{X-3-n_1} \sum_{\ell=1}^{\floor{\frac{X-2-n_1-n_2}{2}}}  \goc^{(n_2+1)+(n_3-2n_1-2\ell-1)+\ell} &\gtc^{n_2+(n_3-2n_1-2\ell-2)+(\ell-1)}\\ \ll &X\goc^X\gtc^X,
\end{align*}
which is dominated by our error term.

Next, suppose that the fixed  letters of $B$ are in $w_2$, which implies the fixed letters of $E$ are in $w_3$. Then, we require that $w_3=e_2\cdots e_{n_3-2\ell+2}$. Furthermore, in order for the letters of $B$ to match in position, we must have that $w_2=V\al_1 B \be_1 D$, where $V$ is the right subword of $D^{-1}\be_3A$ having length $2\ell-1$.  We thus have $n_3-2\ell+1$ fixed $G_1$-letters and $n_3-2\ell$ fixed $G_2$-letters in $E$, as well as $n_2+1$ fixed $G_1$-letters and $n_2$ fixed $G_2$-letters in $B$. Now, we bound the degrees of freedom of the non-fixed letters. First, suppose that $n_1>\ell$. Note then that $g$ maps the letters of $F$ to those of $a_{n_1-\ell+2} \cdots a_{n_1+1}$ in order, and likewise maps the letters of  $A\al_2^{-1}F^{-1}$ to those of $D \be_2^{\prime -1} D^{-1} \be_3 a_1 \cdots a_{n_1-\ell+1}$ in order. Thus, we have the ordered equality  (by $g$)  of letters 
\begin{align*}
a_1 \cdots a_{n_1-\ell+1}  \bar{a}_{n_1-\ell+1} F \al_2^{-1}F^{-1} &=  a_1 \cdots a_{n_1-\ell+1}  \bar{a}_{n_1-\ell+1} (a_{n_1-\ell+2} \cdots a_{n_1+1}) \al_2^{-1}F^{-1}\\&=  D \be_2^{\prime -1}D^{-1}\be_3 a_1 \cdots a_{n_1-\ell-1}.
\end{align*}
 Thus, arguing inductively by translation, we see that that choosing the letters of $F$ determines the letters of $a_1 \cdots a_{n_1-\ell-1}$, and thus also determines those of the rest of $A$ and of $D$, thereby determining all non-fixed letters (while not caring about the constant number of exceptional letters). In the  other case of  $n_1 \le \ell$, the notation above for $a_1 \cdots a_{n_1-\ell-1}$ becomes inviable, but nevertheless we can use a similar argument as above to conclude that choosing the letters of $F$ determines all the non-fixed letters. Counting across all choices of values for the letters, $p$, and $\ell$, we have that the number of additional Wicks commutators arising from this case is bounded from above by
\begin{align*}
\sum_{n_1=0}^{X-3} \sum_{n_2=0}^{X-3-n_1} \sum_{\ell=1}^{\floor{\frac{X-2-n_1-n_2}{2}}} \goc^{(n_2+1)+(n_3-2\ell+1)+\ell} &\gtc^{n_2+(n_3-2\ell)+(\ell-1)}\\ \ll &X\goc^X\gtc^X,
\end{align*}
which is dominated by our error term.

\emph{Case 3}. Finally, suppose the fixed letters of $g$ are in $A$ and $E$. Similarly to before, it is necessary that the fixed letters of $A$ and those of $E$ are in $w_i$ and $w_j$, respectively, such that $i < j$. First, suppose that the fixed letters of $E$ are in $w_3$. Then, we require that $w_3=e_2\cdots e_{n_3-2\ell+2}$. Furthermore, in order for the letters of $A$ to match in position, we need that the fixed letters of $A$ are contained in $w_1$, and in particular, that $w_1=D^{-1}\be_3A \al_1 V$, where $V$ is the left subword of $B\be_1DE$ having length $2\ell-1$. We thus have $n_3-2\ell+1$ fixed $G_1$-letters and $n_3-2\ell$ fixed $G_2$-letters in $E$, as well as $n_1+1$ fixed $G_1$-letters and $n_1$ fixed $G_2$-letters in $A$.  Now, we bound the degrees of freedom of the non-fixed letters. First, suppose that $n_2>\ell$.
 Note then that $g$ maps the letters of $F^{-1}$ to those of $b_1 \cdots b_{\ell}$ in order, and likewise maps the letters of $F\al_3^{-1} B$ to those of $b_{\ell+1} \cdots b_{n_2+1} \be_1 D \be_2^{\prime -1} D^{-1}$. Thus, we have the ordered equality  (by $g$)  of letters 
 \begin{align*}
 F \al_3^{-1} F^{-1}  \bar{b}_{\ell} b_{\ell+1} \cdots b_{n_2+1}  &= F \al_3^{\prime -1} (b_1 \cdots b_{\ell} )\bar{b}_{\ell} b_{\ell+1} \cdots b_{n_2+1}  \\&=b_{\ell+1} \cdots b_{n_2+1} \be_1 D \be_2^{\prime -1} D^{-1}
 \end{align*}
Arguing inductively by translation, we see that that choosing the letters of $F$ determines the letters of $B$ of $D$, thereby determining all non-fixed letters. In the  other case of  $n_2 \le \ell$, the notation above for $b_1 \cdots b_{\ell}$ becomes inviable, but nevertheless we can use a similar argument as above to conclude that $F$ determines all the non-fixed letters. Counting across all choices of values for the letters, $p$, and $\ell$, we have that the number of additional Wicks commutators arising from this case is bounded from above by
\begin{align*}
\sum_{n_1=0}^{X-3} \sum_{n_2=0}^{X-3-n_1} \sum_{\ell=1}^{\floor{\frac{X-2-n_1-n_2}{2}}}\goc^{(n_1+1)+(n_3-2\ell+1)+\ell} &\gtc^{n_1+(n_3-2\ell)+(\ell-1)}\\ \ll &X\goc^X\gtc^X,
\end{align*}
which is dominated by our error term.

Next, suppose that the fixed letters of $E$ are in $w_2$. Then, the fixed letters of $A$ are contained in $w_1$, which requires that $w_1=D^{-1}\be_3A \al_1 V$, where $V$ is the left subword of $B\be_1DE$ having length $2\ell-1$. But then $w_2$ must start on a letter not in $E$, which makes it impossible for the letters of $E$ to match in position. 

Finally, if there are fixed letters in $A, B,$ and $C$, then it is necessarily that $\ell = 0$ and $p=p'$, which does not need to be considered.

If $\ell >(n_3+1)/2$, then we can think of our commutator as a cyclic conjugate of $C^{-1}ABCA^{-1}B^{-1}$ such that the letters are moved from left to right. A symmetric argument like above gives us the conclusion that $W'=D^{-1}\be_3 A\al_1 B\be_1 D E F\al_2 A^{-1}\be_2 B^{-1} \al_3 F^{-1}E^{-1}$ is on average not a generic Wicks commutator of the form $w_1 \al'_1 w_2 \be'_1 w_3 \al'_2 w_1^{-1} \be'_2 w_2^{-1} \al'_3 w_3^{-1} \be'_3$. Note that this entire argument can then be repeated \emph{mutatis mutandis} to show that $W'$ is on average also not a Wicks commutator of the other generic form $(9)$ or of the other possible forms $(3)-(7)$, since the only difference from the previous case is a constant number of exceptional letters and possibly setting one or more of the subwords $w_1, w_2,$ and $w_3$ to be trivial, which overall can only affect error bounds by at worst a multiplicative constant. Likewise, a similar argument shows that $W'$ is also not a Wicks commutator for the case of $\ell=0$, again since the only difference from the previous case is a constant number of exceptional letters. Finally, a similar argument shows that even if $W$ is taken to be of the generic form $(9)$ rather than $(8)$, $W$ is on average only decomposable as a Wicks commutator in one prescribed way (i.e., with respect to a unique partition of $X-3$), and any cyclic conjugate of $W$ is on average not a Wicks commutator. 

We have thus shown that the number of conjugacy classes of commutators with length $4X$ is given by
\begin{align*}
\frac{1}{6} \bigg( \frac{1}{2}\big(\goc( |G_2|&-2 )^2 +( |G_1|-2 )^2 \gtc \big) \\
& \cdot X^2  \goc^{X-1} \gtc^{X-1} + O\left( X\goc^{X} \gtc^{X}  \right)\bigg)\\
= \frac{1}{12}\bigg(&\goc( |G_2|-2 )^2 +( |G_1|-2 )^2 \gtc \bigg) \\& \cdot X^2  \goc^{X-1} \gtc^{X-1} +O\left( X\goc^{X} \gtc^{X}  \right),
\end{align*}
as needed. 

\section{Concluding Remarks}\label{sec:conclusion}

There are a number of directions in which  Theorems~\ref{thm1} and \ref{thm2} can be generalized. First, one can ask: how many conjugacy classes of commutators with word length $k$ are in an arbitrary finitely-generated free product $G = G_1 * \cdots *G_n$, with each $G_i$ nontrivial and having symmetric generating set $\cS_i=\{g_{1}^{(i)}. \ldots, g_{m_i}^{(i)},(g_{1}^{(i)})^{-1}. \ldots, (g_{m_i}^{(i)})^{-1}\}$? While one can define the word length in this context to be with respect to an arbitrary generating set $\cS$, a natural notion of length to use in this setting would be with respect to the symmetric generating set
\[
\cS\defeq \bigcup_{i=1}^{n} \cS_i.
\]
We note that in the case that $G_i$ is finite for all $1 \le i \le n$, we can take $\cS_i = G_i \setminus\{1\}$, for which $\cS$ is consistent with our choice of the set $\cS$ of generators for $G_1 * G_2$ in the statement of Theorem~\ref{thm2}. Counting conjugacy classes of commutators by word length for groups in this more general  form would have analogous geometric consequences as those discussed in Corollary~\ref{cor1}. For example,  Hecke Fuchsian groups $H(\ld)$ for $\ld \ge 2$ have the presentation~\cite{nexthecke}
\[
\langle S,R_\ld : S^2=I \rangle \cong \ZZ/2\ZZ *\ZZ,
\]
and a desire for geometric corollaries akin to those discussed in Section~\ref{sec:intro} motivates a result analogous to Theorem~\ref{thm1} and  Theorem~\ref{thm2} in the case of the free product of a nontrivial finite group and an infinite cyclic group. 

To describe a second potential direction for generalization, let the \emph{$n$-commutators} of a given group be defined by  the elements with trivial abelianization and commutator length $n$. A second direction for generalizing Theorems~\ref{thm1} and \ref{thm2} is to, for any $n$, count the number of conjugacy classes of $n$-commutators with word length $k$ in a group $G$ taken to be either the free group $F_r$ or a finitely generated free product. This is natural to ask, given that Culler~\cite{culler} has classified the possible forms of $n$-commutators for a free group and Vdovina~\cite{vdovina} has done this for an arbitrary free product. In fact, Culler has also classified the possible forms that a product of $n$ square elements can take for a free group, so an analogous question can be asked for the number of conjugacy classes comprised of products of $n$ square elements. If one obtained the asymptotic number of $n$-commutators with length $k$ (respectively, of $n$-square-element-products with length $k$), this would also give a corollary analogous to Corollary~\ref{cor1}. Specifically, given a connected CW-complex $X$ with fundamental group $G$, one would obtain the asymptotic number of free homotopy classes of loops $\gamma : S^1 \to X$ with length $k$ (in the generators of $\cS$) such that there exists a genus-$n$ orientable surface $Y$ with one boundary component (respectively,  a connected sum  $Y$ of $n$ real projective planes, with one boundary component) and a continuous map $f : Y \to X$ satisfying $f(\partial Y) = \Im \gamma$, but also that this statement does not hold when replacing $n$ with any $m \le n$. We expect our combinatorial method to also work in this generalized setting, as long as one has, for the given group, an explicit list of the possible Wicks forms of $n$-commutators (or of products of $n$ square elements).

One important question regarding the growth of groups is whether or not a given growth series $S(z) \defeq \sum_{X=0}^\infty a_X z^X$ is a rational function. For $S(z)$ to be rational, it is necessary that 
\begin{equation}\label{growth}
a_X \sim \sum_{i=1}^n c_i X^{m_i}\alpha_i^X
\end{equation}
for some complex numbers $\alpha_1,\ldots, \alpha_n,c_1,\ldots,c_n$ and nonnegative integers $m_1,\ldots, m_n$, as a function of $X$~\cite[p. 341]{rational}. Consequently, estimates on the number of conjugacy classes of commutator-subgroup elements in $F_r$ of length $k$ show that its growth series cannot be rational~\cite{rational1,rational2,rational3,sharp}. On the other hand, we have shown that the number of conjugacy classes of commutators of length $2X$ in $F_r$ (or those of length $4X$ in $G_1*G_2$) has an asymptotic that is of the desired form (\ref{growth}), which raises the question of whether or not its growth series is rational.

\section*{Acknowledgments}

 This work began at Princeton University as part of the author's senior thesis advised by Peter Sarnak, whom the author would like to thank for providing the number-theoretic motivation for this problem, invaluable discussions/references, and constant encouragement. 

The generalization of Theorem~\ref{thm2} to an arbitrary free product of two nontrivial finite groups was done at Harvard University and was supported by the National Science Foundation Graduate Research Fellowship Program (grant number DGE1745303). The author would like to thank Bena Tshishiku for reading over the manuscript and providing many valuable suggestions. He would also like to thank Aaron Calderon, Keith Conrad, Noam Elkies, Curt McMullen, Hector Pasten, Xiaoheng Wang, and Boyu Zhang for  very helpful discussions.

\section*{Acknowledgments}

\noindent  This work began at Princeton University as part of the author's senior thesis advised by Peter Sarnak, whom the author would like to thank for providing the number-theoretic motivation for this problem, invaluable discussions/references, and constant encouragement. 

The generalization of Theorem~\ref{thm2} to an arbitrary free product of two nontrivial finite groups was done at Harvard University and was supported by the National Science Foundation Graduate Research Fellowship Program (grant number DGE1745303). The author would like to thank Bena Tshishiku for reading over the manuscript and providing many valuable suggestions. He would also like to thank Aaron Calderon, Keith Conrad, Noam Elkies, Curt McMullen, Hector Pasten, Xiaoheng Wang, and Boyu Zhang for  very helpful discussions.

\bibliographystyle{amsplain}
\bibliography{main}

\end{document}